\DeclareSymbolFont{symbolsC}{U}{ntxsyc}{m}{n}
\DeclareMathSymbol{\nsimeq}{\mathrel}{symbolsC}{59}
\def\@tocline#1#2#3#4#5#6#7{\relax
  \ifnum #1>\c@tocdepth 
  \else
    \par \addpenalty\@secpenalty\addvspace{#2}%
    \begingroup \hyphenpenalty\@M
    \@ifempty{#4}{%
      \@tempdima\csname r@tocindent\number#1\endcsname\relax
    }{%
      \@tempdima#4\relax
    }%
    \parindent\z@ \leftskip#3\relax \advance\leftskip\@tempdima\relax
    \rightskip\@pnumwidth plus4em \parfillskip-\@pnumwidth
    #5\leavevmode\hskip-\@tempdima
      \ifcase #1
      \or\or \hskip 2em \or \hskip 2homologyem \else \hskip 3em \fi%
      #6\nobreak\relax
    \dotfill\hbox to\@pnumwidth{\@tocpagenum{#7}}\par
    \nobreak
    \endgroup
  \fi}
\theoremstyle{plain}
\newtheorem{theorem}{Theorem}[section]
\newtheorem{introtheorem}{Theorem}[]
\newtheorem{lemma}[theorem]{Lemma}
\newtheorem{proposition}[theorem]{Proposition}
\theoremstyle{definition}
\newtheorem{notation}[theorem]{Notation}
\newtheorem{remark}[theorem]{Remark}
\newtheorem{definition}[theorem]{Definition}
\numberwithin{equation}{section}
\DeclareMathOperator*{\Hom}{Hom}
\DeclareMathOperator*{\Spec}{Spec}    
\DeclareMathOperator*{\Pic}{Pic}      
\DeclareMathOperator*{\Cl}{Cl}        
\DeclareMathOperator*{\Proj}{Proj}    
\DeclareMathOperator*{\cont}{cont}	  
\newcommand{\Sing}{{\rm Sing_*^{\#A^1}}} 
\newcommand{\Sp}{{\rm Sp}} 		
\def\<{\langle}
\def\>{\rangle} 
\def\-{\overline} 
\def\~{\widetilde}
\def\^{\widehat}
\def\@{\mathcal}
\def\!{\mathscr}
\def\#{\mathbb}
\def\_{\underline}
\mathchardef\mhyphen="2D
\begin{document}
\title{Naive $\#A^1$-homotopies on ruled surfaces}

\author{Chetan Balwe}
\address{Department of Mathematical Sciences, Indian Institute of Science Education and Research Mohali, Knowledge City, Sector-81, Mohali 140306, India.}
\email{cbalwe@iisermohali.ac.in}

\author{Anand Sawant}
\address{School of Mathematics, Tata Institute of Fundamental Research, Homi Bhabha Road, Colaba, Mumbai 400005, India.}
\email{asawant@math.tifr.res.in}
\date{}
\thanks{Chetan Balwe was supported by SERB-DST MATRICS Grant: MTR/2017/000690}
\thanks{Anand Sawant acknowledges support of the Department of Atomic Energy, Government of India, under project no. 12-R\&D-TFR-5.01-0500.  He was supported by DFG Individual Research Grant SA3350/1-1 while he was affiliated to Ludwig-Maximilians-Universit\"at, M\"unchen, where a part of this work was carried out.}

\begin{abstract}
We explicitly describe the $\#A^1$-chain homotopy classes of morphisms from a smooth henselian local scheme into a smooth projective surface, which is birationally ruled over a curve of genus $> 0$.  We consequently determine the sheaf of naive $\#A^1$-connected components of such a surface and show that it does not agree with the sheaf of its genuine $\#A^1$-connected components when the surface is not a minimal model.  However, the sections of the sheaves of both naive and genuine $\#A^1$-connected components over schemes of dimension $\leq 1$ agree.  As a consequence, we show that the Morel-Voevodsky singular construction on a smooth projective surface, which is birationally ruled over a curve of genus $> 0$, is not $\#A^1$-local if the surface is not a minimal model.     
\end{abstract}

\maketitle


\setlength{\parskip}{2pt plus2pt minus1pt}

\section{Introduction}

A basic question in $\#A^1$-homotopy theory is to determine the set of \emph{genuine $\#A^1$-homotopy classes} of morphisms from one scheme to another and study how they compare with the corresponding \emph{naive $\#A^1$-homotopy classes}.  Since the $\#A^1$-homotopy category $\@H(k)$ over a field $k$ is built out of the category of simplicial sheaves on the big Nisnevich site of smooth
schemes over $k$ by taking a suitable localization, one expects naive and genuine $\#A^1$-homotopy classes of morphisms from a smooth henselian local scheme $U$ into a smooth scheme $X$ to be closely related to one another.  

We will focus on smooth projective schemes $X$ over a base field $k$.  The genuine $\#A^1$-homotopy classes of morphisms from a smooth henselian local scheme $U$ into $X$ are just the sections $\pi_0^{\#A^1}(X)(U) = \Hom_{\@H(k)}(U,X)$ of the sheaf of \emph{$\#A^1$-connected components} of $X$ on $U$.  On the other hand, the naive $\#A^1$-homotopy classes $\@S(X)(U)$ of morphisms from a smooth henselian scheme $U$ into $X$ are the quotient of the set of morphisms of schemes $U \to X$ by the equivalence relation generated by naive $\#A^1$-homotopies, that is, morphisms $U \times \#A^1 \to X$.  If the Morel-Voevodsky singular construction $\Sing X$ is $\#A^1$-local (see Section \ref{section a1-connected components} for precise definitions), the sheaves $\@S(X)$ and $\pi_0^{\#A^1}(X)$ agree.  However, $\Sing X$ is $\#A^1$-local in very special cases (see \cite[Theorem 2.3.5, Remark 3.3.8]{Asok-Hoyois-Wendt-2} for a general result about $\#A^1$-locality of $\Sing$) and is not $\#A^1$-local in general (see \cite{Balwe-Hogadi-Sawant}, \cite{Balwe-Sawant-IMRN} for examples). 

It is easy to see that $\@S(X)(U)$ and $\pi_0^{\#A^1}(X)(U)$ always agree when $X$ has dimension $\leq 1$, and the same in case of a non-uniruled surface $X$ can be obtained from the dimension $\leq 1$ case (see \cite[Theorem 3.14, Corollary 3.15]{Balwe-Hogadi-Sawant}).  A result of Asok and Morel (\cite[Theorem 2.4.3]{Asok-Morel}; see also \cite[Theorem 3.9, Corollary 3.10]{Balwe-Hogadi-Sawant}) states that for a proper scheme $X$ over a field $k$, one has a bijection
\begin{equation}
\label{eqn Asok-Morel}
\@S(X)(\Spec F) \xrightarrow{\simeq} \pi_0^{\#A^1}(X)(\Spec F),
\end{equation}
for every finitely generated, separable field extension $F$ of $k$.  However, the corresponding result for a general $U$ cannot hold, as shown by the counterexample constructed in \cite[Section 4]{Balwe-Hogadi-Sawant} (for $U$ the spectrum of a henselian discrete valuation ring and $X$ a particular threefold).  

When the base field $k$ is algebraically closed of characteristic $0$, the set $\pi_0^{\#A^1}(X)(U)$ has been explicitly determined in \cite{Balwe-Sawant-ruled}, where $X$ is a smooth projective surface, birationally ruled over a curve of genus $>0$.  In this companion paper to \cite{Balwe-Sawant-ruled}, we determine the sheaf of naive $\#A^1$-connected components of a (birationally) ruled surface over an algebraically closed field of characteristic $0$.  The main result of the paper provides an extension of \eqref{eqn Asok-Morel} to sections over schemes of dimension $\leq 1$, when $X$ is a smooth projective surface.

\begin{introtheorem}
\label{theorem main theorem intro}
Let $k$ be an algebraically closed field of characteristic $0$ and let $X$ be a smooth projective surface over $k$, which is birationally ruled over a curve of genus $>0$.  Then the natural map
\[
\@S(X)(U) \to \pi_0^{\#A^1}(X)(U)
\]
is a bijection, for every henselian local scheme $U$ over $k$ of dimension $\leq 1$.
\end{introtheorem}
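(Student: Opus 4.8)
The plan is to reduce the entire statement to the structure of the ruling. Since $C$ has genus $>0$, the birational ruling extends to an honest morphism $\pi \colon X \to C$, whose generic fibers are $\#P^1$ and whose finitely many special fibers are the trees of rational curves introduced in passing from the minimal model to $X$. Because $C$ is $\#A^1$-rigid, $\pi_0^{\#A^1}(C) = C$, and the companion paper \cite{Balwe-Sawant-ruled} shows that $\pi$ induces an isomorphism $\pi_0^{\#A^1}(X) \xrightarrow{\sim} C$ (the fibers being $\#A^1$-chain connected). Under this identification the map of the theorem becomes $\@S(X)(U) \to C(U)$, $[f] \mapsto \pi \circ f$, so it remains to show that this is a bijection for $U$ henselian local of dimension $\leq 1$.

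First I would settle surjectivity. Given $c \in C(U) = \Hom(U,C)$, I would produce a lift $f \colon U \to X$ with $\pi f = c$: as $\pi$ is proper and surjective with geometrically connected fibers and $U$ is henselian local of dimension $\leq 1$, such a section exists (lift a point of the fiber over the closed point of $U$ and spread out, using the valuative criterion of properness in the one-dimensional case). Then $[f] \mapsto c$, which gives surjectivity. When $\dim U = 0$ this is already contained in the Asok--Morel bijection \eqref{eqn Asok-Morel}.

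The substance of the theorem is injectivity: if $f, g \colon U \to X$ satisfy $\pi f = \pi g = c$, I must connect them by a finite chain of naive $\#A^1$-homotopies over $U$. Both are sections of the pulled-back family $X_c := X \times_{C} U \to U$. If $c$ avoids the degeneration locus, then $X_c \to U$ is a $\#P^1$-bundle, hence trivial over the henselian local $U$, and the claim reduces to the naive $\#A^1$-connectedness $\@S(\#P^1)(U) = *$ (any section of dimension $\leq 1$ misses a $k$-point of $\#P^1$ and so factors through an affine chart, whence a chain of homotopies to a constant). The real work occurs when $c$ carries the closed point of $U$ into a degenerate fiber: the special fiber of $X_c \to U$ is then a tree of $\#P^1$'s, the sections $f$ and $g$ may meet distinct components of this tree, and one must slide them across the nodes. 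I would realize each such node-crossing by an explicit naive homotopy $U \times \#A^1 \to X_c$, matching a homotopy inside $\#P^1$ over the generic point with the component-to-component move over the closed point; this is where the explicit description of $\@S(X)(U)$ established in the body of the paper is used, together with the combinatorics of the tree, to exhibit the connecting chain.

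The principal obstacle, and the reason the statement is special to $\dim U \leq 1$, is exactly the realization of these node-crossings as honest homotopies defined over all of $U$. This amounts to a lifting problem through the blow-down maps over the henselian local base, whose solvability hinges on the image of $U$ meeting the codimension-$\geq 1$ degeneration locus in a zero-dimensional scheme, so that the henselian property separates the branches and lets one work on a single $\#P^1$ at a time, where naive and genuine homotopies coincide. For two-dimensional test schemes the manner in which a section can wrap around a node cannot in general be unwound by naive homotopies; this is precisely the failure of $\#A^1$-invariance of $\@S(X)$ that the paper exhibits and that forces $\@S(X) \neq \pi_0^{\#A^1}(X)$ as sheaves. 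Carrying out the node-crossing bookkeeping over $U$ and matching it against the computation of \cite{Balwe-Sawant-ruled} is the technical heart of the argument.
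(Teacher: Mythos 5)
Your reduction rests on the claim that the ruling induces an isomorphism $\pi_0^{\#A^1}(X) \xrightarrow{\sim} C$, so that the theorem becomes the bijectivity of $\@S(X)(U) \to C(U)$. This is false when $X$ is not a minimal model, and it is precisely the point of this paper and of \cite{Balwe-Sawant-ruled} that it fails: $\#A^1$-chain connectedness of the fibres of $X \to C$ over $k$-points does not make the induced map on $\pi_0^{\#A^1}$ injective on sections over a henselian local $U$ of positive dimension. Concretely, take $U = \Spec R$ with $R$ a henselian discrete valuation ring with uniformizer $t$, let $\gamma$ correspond to $r_0 = t^2$, and consider the sections $\beta_{t}^{r_0}$ and $\beta_{2t}^{r_0}$ of $X(r_0) \to U$. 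Both lie over the same $\gamma$ and both send the closed point of $U$ to the node $L_1 \cap L_2$, but by Proposition \ref{proposition S single point} they are not $\#A^1$-chain homotopic, since $r_1/r_2 - 1 = -1/2$ is a unit and hence does not lie in $\sqrt{\<t\>} + \sqrt{\<t\>} = \mathfrak{m}$; by the statement of Theorem \ref{theorem main theorem intro} itself (or by the description of $\@S^2$ in \cite{Balwe-Sawant-ruled}) they are therefore not genuinely $\#A^1$-homotopic either. So $\pi_0^{\#A^1}(X)(U) \to C(U)$ is not injective even for one-dimensional $U$, the ``injectivity'' you set out to prove is false, and the node-crossing homotopies you propose are obstructed exactly by Lemma \ref{lemma local generator ghost homotopy}, which says that the ideal $\alpha^*(I(r_0))$ is an invariant of the $\#A^1$-chain homotopy class.

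The correct target of comparison is not $C$ but $\@S^2(X)$: the companion paper proves $\pi_0^{\#A^1}(X) = \@S^2(X)$ and describes its sections explicitly, and the content of the theorem is that the finer invariant cutting out $\@S(X)(U)$ --- the condition $r_1/r_2 - 1 \in \sqrt{\<r_1\>} + \sqrt{\<r_0/r_1\>}$ of Proposition \ref{proposition S single point}, established by a direct analysis of primitive polynomials $p, q \in R[T]$ defining a homotopy into $\#P^1_U$ --- coincides with the coarser condition $r_1/r_2 - 1 \in \sqrt{\<r_1, r_0/r_1\>}$ governing $\@S^2(X)(U)$ precisely when $R$ is a discrete valuation ring, because then $\sqrt{\<r_1\>} + \sqrt{\<r'\>} = \sqrt{\<r_1,r'\>}$. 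Your proposal never engages with this invariant, and any argument concluding $\@S(X)(U) = C(U)$ proves something false rather than the stated theorem; the dichotomy you draw between $\dim U \leq 1$ and $\dim U = 2$ is also located in the wrong place, since the failure in dimension $2$ is the divergence of $\sqrt{\<r_1\>} + \sqrt{\<r'\>}$ from $\sqrt{\<r_1,r'\>}$, not an inability to cross nodes.
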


Theorem \ref{theorem main theorem intro} is known to hold in the case of non-uniruled surfaces by \cite[Corollary 3.15]{Balwe-Hogadi-Sawant} and the case of rational surfaces is easy to handle since they are covered by affine spaces \cite[Lemma 2.2.11]{Asok-Morel}.  Thus, Theorem \ref{theorem main theorem intro} in fact holds for all smooth projective surfaces over $k$.  It has been proved in \cite{Balwe-Sawant-ruled} that the sheaf $\pi_0^{\#A^1}(X)$ for such an $X$ agrees with the second iteration of the functor $\@S$ on $X$.  Theorem \ref{theorem main theorem intro} will be proved by directly obtaining a precise description of $\@S(X)(U)$ and comparing it with the description of $\@S^2(X)(U)$ obtained in \cite{Balwe-Sawant-ruled}.

We emphasize here that our method of proof of Theorem \ref{theorem main theorem intro} gives an explicit description of $\@S(X)(U)$ for any smooth henselian local $k$-scheme $U$ in terms of an algebraic condition, which can be easily verified.  The key result pertaining to this is Proposition \ref{proposition S single point}, which gives this description in the case of a single point blowup.  The general case reduces to this case, as outlined in Section \ref{section general case and applications} and is of independent interest.
 
As an application of the method of our proof, we get the non-$\#A^1$-locality of the Morel-Voevodsky singular construction applied to any smooth projective surface birationally ruled over a curve of genus $>0$, which is not a minimal model.

\begin{introtheorem}
\label{theorem application intro}
Let $k$ be an algebraically closed field of characteristic $0$ and let $X$ be a smooth projective surface over $k$, which is birationally ruled over a curve of genus $>0$.  If $X$ is not a minimal model, then the natural epimorphism 
\[
\@S(X) \to \@S^2(X) = \pi_0^{\#A^1}(X)
\]
is not an isomorphism of Nisnevich sheaves.  In particular, $\Sing X$ is not $\#A^1$-local.
\end{introtheorem}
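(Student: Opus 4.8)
The plan is to exhibit a single smooth henselian local $k$-scheme $U$ on which the natural map $\@S(X)(U) \to \@S^2(X)(U) = \pi_0^{\#A^1}(X)(U)$ fails to be injective. Since a morphism of Nisnevich sheaves is an isomorphism if and only if it induces a bijection on sections over every henselian local scheme (these being the stalks for the Nisnevich topology), and since the map in question is already known to be an epimorphism, a single such failure of injectivity suffices to show it is not an isomorphism. By Theorem \ref{theorem main theorem intro} the map is a bijection on sections over every henselian local scheme of dimension $\leq 1$, so any witnessing $U$ must have dimension $\geq 2$, and I would look for a two-dimensional example. Because $X$ is not a minimal model it carries a $(-1)$-curve; contracting it, we may write $X$ as the blowup $\pi \colon X \to X'$ of a smooth projective surface $X'$ at a single closed point $x_0$, with exceptional curve $E \cong \#P^1$. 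By the reduction carried out in Section \ref{section general case and applications}, the behaviour of $\@S(X)$ is governed by this single blowup, so the explicit description of $\@S(X)(U)$ furnished by Proposition \ref{proposition S single point} is available.

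For the witnessing scheme I would take a two-dimensional henselian local $U$ adapted to the blowup and exploit directly the two explicit descriptions now in hand: that of $\@S(X)(U)$ from Proposition \ref{proposition S single point}, and that of $\@S^2(X)(U) = \pi_0^{\#A^1}(X)(U)$ obtained in \cite{Balwe-Sawant-ruled}. Comparing these descriptions — which is precisely the engine behind Theorem \ref{theorem main theorem intro} — one sees that the equivalence relation defining $\@S(X)(U)$ is strictly finer than the one defining $\@S^2(X)(U)$ exactly when $X$ is not minimal and $\dim U \geq 2$, the extra identifications in the latter being forced by the $\#A^1$-connectedness of the exceptional curve $E \cong \#P^1$. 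From this discrepancy one reads off two morphisms $f, g \colon U \to X$, differing only in their behaviour along $E$, which are identified in $\@S^2(X)(U)$ but represent distinct classes in $\@S(X)(U)$; this is the desired failure of injectivity.

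The main obstacle will be the verification that $f$ and $g$ are genuinely \emph{not} connected by any finite chain of naive $\#A^1$-homotopies over $U$, as opposed to merely failing to be connected by an obvious one. This is exactly where the algebraic criterion of Proposition \ref{proposition S single point} is indispensable: it converts an a priori infinite search over chains of homotopies into a checkable invariant attached to the blowup, and the obstruction it records is precisely the local geometry along $E$ that vanishes upon contraction to the minimal model. The equality in $\pi_0^{\#A^1}(X)(U)$, by contrast, is a ``ghost homotopy'' that only materialises after a second application of $\@S$, which is why it does not contradict the inequality in $\@S(X)(U)$. Finally, the assertion about $\Sing X$ is immediate: as recalled in the introduction, $\#A^1$-locality of $\Sing X$ would force the sheaves $\@S(X)$ and $\pi_0^{\#A^1}(X)$ to coincide, so the failure of $\@S(X) \to \@S^2(X) = \pi_0^{\#A^1}(X)$ to be an isomorphism shows that $\Sing X$ is not $\#A^1$-local.
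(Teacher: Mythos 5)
Your proposal follows essentially the same route as the paper: the authors also take a two-dimensional henselian local scheme (concretely $U = \Spec k[x,y]^h_{\langle x,y\rangle}$ with $r_0 = x(y^2+x)$, $r_1 = x$, $r_2 = x(1+y)$), use the algebraic criterion of Proposition \ref{proposition S single point} to show the two sections are not naively $\#A^1$-chain homotopic, invoke the ghost-homotopy results of the companion paper to identify them in $\@S^2(X)(U)$, and conclude via Remark \ref{remark sing a1-local}. The only thing your sketch defers is writing down the explicit witness for which $r_2/r_1 - 1 \in \sqrt{\langle r_1, r'\rangle}$ but $r_2/r_1 - 1 \notin \sqrt{\langle r_1\rangle} + \sqrt{\langle r'\rangle}$, which is the one concrete computation the argument ultimately rests on.
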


The methodology used in the proofs of Theorems \ref{theorem main theorem intro} and \ref{theorem application intro} is to classify the naive $\#A^1$-homotopy classes of morphisms from a smooth henselian local $k$-scheme $U$ into $X$ birationally ruled over a curve $C$ of genus $>0$ by fixing a morphism $\gamma \colon U \to C$ and classifying homotopy classes of morphisms $U \to X$ over $\gamma$ by tracking where they map the closed point of $U$ into $X$.  Preliminaries about $\#A^1$-connectedness are given in Section \ref{section a1-connected components}, whereas results about local geometry of ruled surfaces are recalled in Section \ref{section preliminaries ruled}.  We first handle the case of the blowup of one point on the projectivization of a rank $2$ vector bundle on $C$ in Section \ref{section single point blowup}.  In Section \ref{section general case and applications}, the main results are obtained for a special class of blowups called \emph{nodal blowups} by using an appropriate modification of the arguments given in Section \ref{section single point blowup}.  One then reduces the general case to these blowups by employing the same method used in \cite{Balwe-Sawant-ruled}, which analyzes local structure of naive $\#A^1$-homotopy classes of blowups of such surfaces at infinitely near points.  We point out that although the strategy of the proofs of the main results is similar to the one in \cite{Balwe-Sawant-ruled}, the key geometric input in this paper (Proposition \ref{proposition S single point} and its proof) is very different in spirit and of independent interest to commutative algebraists and algebraic geometers.

\section{Naive and genuine \texorpdfstring{$\#A^1$}{A1}-connected components}
\label{section a1-connected components}

We fix a base field $k$ and denote by $Sm/k$ the category of smooth, finite-type, separated schemes over $\Spec k$.  We will denote by $\@H(k)$ the unstable $\#A^1$-homotopy category constructed by Morel and Voevodsky \cite{Morel-Voevodsky}.  One obtains $\@H(k)$ from the category of simplicial Nisnevich sheaves of sets on $Sm/k$ (also called \emph{spaces} over $k$ for brevity) by localizing with respect to Nisnevich local weak equivalences and further localizing with respect to the class of projection maps $\@X \times \#A^1 \to \@X$, for every simplicial Nisnevich sheaf of sets $\@X$.  All presheaves on $Sm/k$ will be extended to essentially smooth schemes (that is, schemes which are filtered inverse limits of diagrams of smooth schemes in which the transition maps are \'etale, affine morphisms) by defining $\@F(\varprojlim U_{\alpha}) = \varinjlim \@F(U_\alpha)$.  By a \emph{smooth henselian local scheme} over $k$, we will mean the henselization of the local ring at a smooth point of a scheme over $k$.


We will follow the conventions and notation of \cite{Balwe-Hogadi-Sawant} and \cite{Balwe-Sawant-ruled} regarding $\#A^1$-connected components.  We briefly recall the notions relevant to this paper below.


\begin{definition}
\label{definition sing}
Let $\mathcal X$ be a space over $k$. Define ${\rm Sing_*^{\mathbb A^1}} \mathcal X$ to be the simplicial sheaf given by
\[
({\rm Sing_*^{\mathbb A^1}} \mathcal X)_n = \underline{\rm Hom}(\Delta^n,\mathcal X_n), 
\]
\noindent where $\Delta^{\bullet}$ denotes the cosimplicial sheaf  
\[
\Delta^n  = {\rm Spec}~ \frac{k[x_0,...,x_n]}{\<\sum_ix_i=1\>}
\]
\noindent with natural face and degeneracy maps analogous to the ones on topological simplices.
\end{definition}

Given a simplicial sheaf of sets $\mathcal X$ on $Sm/k$, we will denote by $\pi_0(\mathcal X)$ the presheaf on $Sm/k$ that associates with $U \in Sm/k$ the coequalizer of the diagram $\mathcal X_1(U) \rightrightarrows \mathcal X_0(U)$, where the maps are the face maps coming from the simplicial data of $\mathcal X$.  We will denote the Nisnevich sheafification functor by $a_{\rm Nis}$.

\begin{definition}
\label{definition S}
The sheaf of \emph{$\mathbb A^1$-chain connected components} of a space $\mathcal X$ is defined to be $$\mathcal S(\mathcal X) := a_{\rm Nis} \left(\pi_0{\rm \Sing} \mathcal X \right).$$
\end{definition}

Thus, when $\@X$ is the space given by a scheme $X$ over $k$, for every smooth henselian local $k$-scheme $U$, the set $\@S(X)(U)$ is the quotient of $X(U)$ by the equivalence relation generated by $\sigma_0^*(h) = \sigma_1^*(h)$, where $h \in X(U \times \#A^1)$ is a naive $\#A^1$-homotopy of $U$ into $X$ and $\sigma_i\colon  U \to U \times \#A^1$ denotes the $i$-section.

\begin{definition}
\label{definition homotopy}
Let $X \in Sm/k$ and let $U$ be an essentially smooth scheme over $k$.
\begin{enumerate}[label=$(\arabic*)$]
\item An \emph{$\#A^1$-homotopy} of $U$ in $X$ is a morphism $h\colon  U \times \#A^1_{k} \to X$. We say that $f,g \in X(U)$ are \emph{$\#A^1$-homotopic} if there exists an $\#A^1$-homotopy $h\colon  U \times \#A^1 \to X$ such that $\sigma_0^*(h) = f$ and $\sigma_1^*(h)= g$.

\item An \emph{$\#A^1$-chain homotopy} of $U$ in $X$ is a finite sequence $\@H=(h_1, \ldots, h_r)$ where each $h_i$ is an $\#A^1$-homotopy of $U$ in $X$ such that $\sigma_1^*(h_i) = \sigma_0^*(h_{i+1})$ for $1 \leq i \leq r-1$. We say that $f, g \in X(U)$ are \emph{$\#A^1$-chain homotopic} if there exists an $\#A^1$-chain homotopy $h=(h_1, \ldots, h_r)$ such that $\sigma_0^*(h_1) = f$ and $\sigma_1^*(h_r) = g$.

\item The \emph{total space} $\Sp(h)$ of an $\#A^1$-homotopy $h\colon  U \times \#A^1_{k} \to X$ of $U$ in $X$ is just $U \times \#A^1_{k}$.  The \emph{total space} $\Sp(\@H)$ of an $\#A^1$-chain homotopy $\@H=(h_1, \ldots, h_r)$ is the disjoint union of the total spaces of $h_1, \ldots, h_r$.  We write $f_{\@H}\colon  \Sp(\@H) \to U$ for the canonical projection on $U$.  The morphism $h_{\@H}\colon  \Sp(\@H) \to X$ is just the morphism defining the chain homotopy.
\end{enumerate} 
\end{definition}

\begin{definition}
\label{definition pi0A1}
The sheaf of \emph{$\mathbb A^1$-connected components} of a space $\mathcal X$ is defined to be 
\[
\pi_0^{\#A^1}(\mathcal X) = a_{\rm Nis} {\Hom}_{\@H(k)}(-, \mathcal X).
\]
Equivalently, $\pi_0^{\#A^1}(\mathcal X) = a_{\rm Nis} \left(\pi_0{L_{\mathbb A^1}} \mathcal X \right)$, where $L_{\mathbb A^1}$ denotes an $\#A^1$-fibrant replacement functor for the injective Nisnevich local model structure.
\end{definition}

Given schemes $U$ and $X$ over $k$, two elements of $X(U)$ are said to be \emph{naively $\#A^1$-homotopic} if they are $\#A^1$-chain homotopic.  Two elements of $X(U)$ are said to be \emph{genuinely $\#A^1$-homotopic} if they map to the same element in $\pi_0^{\#A^1}(X)(U)$.

\begin{remark}
\label{remark sing a1-local}
If $\Sing \@X$ is $\#A^1$-local in the sense of \cite[Definition A.2.1]{Asok-Morel}, then the natural epimorphism $\@S(\@X) \to \pi_0^{\#A^1}(\@X)$ is an isomorphism (see \cite[Remark 2.2.9]{Asok-Morel}).
\end{remark}

\section{Preliminaries on ruled surfaces}
\label{section preliminaries ruled}

Theorem \ref{theorem main theorem intro} and Theorem \ref{theorem application intro} will be proved by using the classification of ruled surfaces and an induction on the \emph{complexity} of the blowup involved.  In this section, we will briefly outline the formalism developed in \cite{Balwe-Sawant-ruled} to handle blowups of ruled surfaces and state some results about local geometry of ruled surfaces proved in \cite[Section 4]{Balwe-Sawant-ruled}.  We follow the conventions of \cite[Section 4]{Balwe-Sawant-ruled}, where we also refer the reader to for more details.  We will focus on smooth projective surfaces, which are obtained from $\#P^1 \times C$ by a finite number (possibly zero) of successive blowups at smooth, closed points, where $C$ is the henselization of the local ring of a closed point on a smooth projective curve of genus $\geq 1$.  The results for general ruled surfaces will follow from the ones in this special case by standard arguments (see the proofs of the main results in Section \ref{section general case and applications}).

Throughout the rest of the article, we will assume that $k$ is an algebraically closed field of characteristic $0$, although this assumption is not always necessary for some of the results proved in the rest of the article.  We fix the following setting for this section.

\begin{notation}
\label{notation nodal blowups}
\hspace{1cm}
\begin{enumerate}
\item Let $x$ be a variable and let $A$ be the henselization of the polynomial ring $k[x]$ at the maximal ideal $\<x\>$. Let $C = \Spec A$ and let $c_0$ denote the closed point of $C$.  Every $1$-dimensional regular henselian local ring containing $k$ is isomorphic to $A$.  

\item Let $Y$ and $Z$ be variables which denote the homogeneous coordinates on $\#P^1_C$. Thus, $\#P^1_C = \Proj A[Y,Z]$. 

\item Let $y$ denote the rational function $Y/Z$ on $\#P^1_C$ which is defined on the open subscheme of $\#P^1_C$ defined by the condition $Z \neq 0$. This open subscheme is simply $\Spec A[y] \simeq \#A^1_C$ and the point $(c_0, [0:1])$ is defined by the ideal $\<x,y\>$ of $A[y]$.  

\item Let $\ell_{\infty}$ denote the closed subscheme $C \times \{[0:1]\}$ of $\#P^1_C$, which is the divisor of zeros of $y$. The closed subscheme $C \times \{[1:0]\}$ is the divisor of poles of $y$ and will be denoted by $\ell_{-\infty}$. 
\end{enumerate}
\end{notation}

\begin{definition}
\label{definition nodal blowups}
Let $X$ be any scheme that is obtained from $\#P^1_C$ by a finite number (possibly zero) of successive blowups at smooth, closed points.
\begin{enumerate}
\item The preimage of $c_0$ under the morphism $X \to C$ is a connected scheme, the irreducible components of which are isomorphic to $\#P^1_k$. We refer to these as \emph{lines} on $X$.

\item We define a \emph{pseudo-line} to be any curve that is either a line on $X$ or the proper transform of $\ell_{\infty}$.

\item A \emph{node} on $X$ is defined to be the intersection point of two pseudo-lines. It is easy to see that any node is the point of intersection of exactly two pseudo-lines. 
Thus, for instance, the point $(c_0, [0:1])$ is the only node on the scheme $\#P^1_C$. 

\item We will denote by $\@N$ the collection of schemes $X$ admitting a morphism $X \to \#P^1_C$, which factors as  
\[
X = X_r \xrightarrow{\pi_r} X_{r-1} \xrightarrow{\pi_{r-1}} \cdots \xrightarrow{\pi_1} X_0 = \#P^1_C
\]
with $r \geq 0$, where for all $i\geq 1$, $\pi_i\colon  X_i \to X_{i-1}$ is the blowup of $X_{i-1}$ at some \emph{node} of $X_{i-1}$.  Elements of $\@N$ will be referred to as \emph{nodal blowups}.
\end{enumerate}
\end{definition}

Since all the nodes in all the $X \in \@N$ lie over the point $(c_0, [0:1])$, we see that the ideals defining these nodes are generated by rational functions in $x$ and $y$.  We record the following lemma from \cite[Lemma 4.12]{Balwe-Sawant-ruled}.


\begin{lemma}
\label{lemma blowups description}
Let $X \in \@N$. Then:
\begin{itemize}
\item[(a)] If a line $l$ on $X$ has a parameter of the form $x^a/y^b$, then $a$ and $b$ are coprime non-negative integers.  
\item[(b)] If $C_1$ and $C_2$ are pseudo-lines on $X$ with parameters $x^a/y^b$ and $x^c/y^d$ which intersect at a node defined by the ideal $\<x^a/y^b, y^d/x^c\>$, then $ad-bc = 1$ (and so, in particular, $a/b > c/d$). 
\end{itemize}
\end{lemma}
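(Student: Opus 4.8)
The plan is to prove both parts simultaneously by induction on the number $r$ of blowups in a factorization $X = X_r \to \cdots \to X_0 = \#P^1_C$, establishing the single stronger statement: \emph{every node $N$ of $X$ has maximal ideal $\langle x^a/y^b,\, y^d/x^c \rangle$ with $a,b,c,d$ non-negative integers and $ad-bc = 1$}, where $x^a/y^b$ and $y^d/x^c$ are the local equations at $N$ of the two pseudo-lines through $N$. Both assertions of the lemma drop out of this: the relation $ad-bc=1$ forces $\gcd(a,b)=\gcd(c,d)=1$ (a common prime factor of $a,b$ would divide $ad-bc$), which is the coprimality in (a) for the parameter $x^a/y^b$ of $C_1$ and for the parameter $x^c/y^d$ of $C_2$; non-negativity will be carried through the induction; and $ad-bc=1>0$ yields $a/b>c/d$. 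The base case $r=0$ is immediate, since the unique node of $\#P^1_C$ is $(c_0,[0:1]) = \langle x,y\rangle = \langle x^1/y^0,\, y^1/x^0\rangle$, giving $(a,b,c,d)=(1,0,0,1)$ and $ad-bc=1$.

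For the inductive step I would let $\pi_i\colon X_i \to X_{i-1}$ be the blowup of a node $N$ with $\mathfrak m_N = \langle f,g\rangle$, $f = x^a/y^b$, $g = y^d/x^c$, $ad-bc=1$. Since blowups are local, every node of $X_{i-1}$ other than $N$ survives unchanged in $X_i$ with its monomial form, so only the nodes created on the exceptional line $E$ need analysis. Because $\mathfrak m_N=\langle f,g\rangle$, the pair $(f,g)$ is a regular system of parameters at $N$, and the unimodularity $ad-bc=1$ lets me invert the exponent matrix to recover $x = f^d g^b$ and $y = f^c g^a$ as genuine monomials in $f,g$. Passing to the two charts of the blowup in which $g/f$ and $f/g$ become regular, the key observation is the pair of monomial identities
\[
\frac{g}{f} = \frac{y^{b+d}}{x^{a+c}}, \qquad \frac{f}{g} = \frac{x^{a+c}}{y^{b+d}},
\]
from which the two new nodes are $E\cap C_2 = \langle x^a/y^b,\, y^{b+d}/x^{a+c}\rangle$ and $C_1\cap E = \langle x^{a+c}/y^{b+d},\, y^d/x^c\rangle$. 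These are again of the prescribed form with non-negative exponents (the new exponents $a+c,\,b+d$ being sums of old ones), and their determinants compute to $a(b+d)-b(a+c)=ad-bc=1$ and $(a+c)d-(b+d)c=ad-bc=1$. Moreover $E$ acquires local equation $x^a/y^b$ at the first new node and $y^d/x^c$ at the second, so its two parameters are the coprime fractions $x^a/y^b$ and $x^c/y^d$; this verifies (a) for the new line and closes the induction. Underlying this is exactly the Stern--Brocot/Farey-mediant insertion $a/b > (a+c)/(b+d) > c/d$, which is what keeps the determinant equal to $+1$ rather than $-1$ and fixes the labelling $a/b>c/d$.

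The step I expect to be the main obstacle is not the arithmetic but the geometric bookkeeping that makes it legitimate: verifying that $\langle x^a/y^b,\, y^d/x^c\rangle$ really is the maximal ideal at $N$ (i.e.\ that these rational functions lie in $\mathcal O_{X_{i-1},N}$ and form a regular system of parameters there), and keeping straight the \emph{parameter}-versus-\emph{local-equation} convention, whereby the lower-slope pseudo-line $C_2$ contributes the reciprocal $y^d/x^c$ of its parameter $x^c/y^d$ to the node ideal. This is where the hypotheses that all nodes lie over $(c_0,[0:1])$ — so everything is a rational function in $x,y$ — and the unimodular substitution $x=f^d g^b$, $y=f^c g^a$ do the real work: they identify the local picture at $N$ with the ordinary blowup of the affine plane in the coordinates $(f,g)$, so that the proper transforms and the exceptional line meet transversally in the exact monomial pattern recorded above. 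Once this identification is in place, the remaining points — persistence of the other nodes and preservation of non-negativity — are routine.
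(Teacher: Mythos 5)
A preliminary remark: this paper does not actually prove Lemma \ref{lemma blowups description} --- it is recorded from \cite[Lemma 4.12]{Balwe-Sawant-ruled} --- so there is no in-paper proof to compare against. Your strategy (induction on the length of the blowup sequence, with the strengthened inductive statement about node ideals and the Farey-mediant insertion $a/b > (a+c)/(b+d) > c/d$) is the natural argument and is essentially the intended one; the base case, the unimodular substitution $x=f^dg^b$, $y=f^cg^a$, and the two determinant computations $a(b+d)-b(a+c)=(a+c)d-(b+d)c=ad-bc=1$ are all correct.

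There are, however, two bookkeeping slips. First, your labels on the two new nodes are interchanged: since the parameter $x^a/y^b$ of $C_1$ must be non-constant on $C_1$, the local equation of $C_1$ at $N$ is the \emph{other} generator $g=y^d/x^c$ (check on $\#P^1_C$ itself: $C_1=\ell_\infty$ has parameter $x$ but local equation $y$), so the proper transform of $C_1$ is $\{g/f=0\}$ in the chart where $E=\{f=0\}$, and therefore $\<x^a/y^b,\, y^{b+d}/x^{a+c}\>$ is the node $E\cap \widetilde{C_1}$ while $\<x^{a+c}/y^{b+d},\, y^{d}/x^{c}\>$ is $E\cap\widetilde{C_2}$ --- the opposite of what you wrote. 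This does not affect the determinants. Second, and more substantively, the closing sentence ``its two parameters are the coprime fractions $x^a/y^b$ and $x^c/y^d$'' is false and does not verify (a) for the new line: a line has one parameter, and those two fractions are the parameters of $C_1$ and $C_2$, not of $E$. The parameter of $E$ is the single mediant $x^{a+c}/y^{b+d}=f/g$, which restricts to a coordinate on $E$, and its exponents are non-negative (sums of non-negatives) and coprime precisely because $a(b+d)-b(a+c)=1$. With these two corrections --- both already implicit in your own computation of $f/g$ and your Stern--Brocot remark --- the induction closes and the proof is complete.
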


This shows that for pair of coprime integers $(a,b)$ with $a<b$, there is at most one line on $X$ with parameter $x^a/y^b$.  It can be shown that if such lines exist on two schemes $X_1$, $X_2$ in $\@N$, then the canonical birational maps from $X_1$ to $X_2$ will take the lines into each other.  If $X \in \@N$ has $n$ lines, we can associate to it the sequence of rational numbers $r_0 = -\infty < r_1 = 0 < \ldots < r_n = 1 < r_{n+1} = \infty$ such that:
\begin{itemize}
\item For $1 \leq i \leq n$, if $r_i$ is written in reduced fractional form as $a_i/b_i$ for some non-negative integers $a_i$, $b_i$, then $X$ contains a line parameterized by $x^{a_i}/y^{b_i}$. We will label this line as $\ell_{r_i}$. For $i = 0$ and $i=n+1$, the symbol $\ell_{r_i}$ will denote the pseudo-lines $\ell_{-\infty}$ and $\ell_{\infty}$ respectively.   
\item For $1 \leq i \leq n$, the line $\ell_{r_i}$ only meets the pseudo-lines $\ell_{r_{i-1}}$ and $\ell_{r_{i+1}}$. If $r_i$ and $r_{i+1}$ are written in reduced fractional form as $r_i = a_i/b_i$ and $r_{i+1} = a_{i+1}/b_{i+1}$ then they meet in a node defined by the maximal ideal $\<x^{a_{i+1}}/y^{b_{i+1}}, y^{b_i}/x^{a_i}\>$. (Here, if $r_{i+1} = \infty$, we choose $a_{i+1} = 1$ and $b_{i+1} = 0$.)
\end{itemize}

\section{The case of the blow-up of a minimal model at a single closed point}
\label{section single point blowup}

The aim of this section is to prove the following result, which proves both Theorem \ref{theorem main theorem intro} and \ref{theorem application intro} stated in the introduction in a special case of the blowup of a single closed point on $\#P^1 \times C$, where $C$ is a curve of genus $>0$.

\begin{theorem}
\label{theorem single point}
Let $E$ be a $\#P^1$-bundle over a smooth projective curve $C$ of genus $>0$. Let $X$ be the blowup of $E$ at a single closed point. Then 
\begin{enumerate}[label=$(\alph*)$]
\item $\@S(X)(U) = \@S^2(X)(U)$, where $U = \Spec R$, for a henselian discrete valuation ring $R$ over $k$;
\item $\@S(X) \neq \@S^2(X)$.
\end{enumerate}
\end{theorem}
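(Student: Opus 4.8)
The plan is to carry out the entire computation inside the local model of Section \ref{section preliminaries ruled}. Since $\@S(X)$ and $\pi_0^{\#A^1}(X)$ are Nisnevich sheaves and $U$ is henselian local, only the picture around the relevant point of $X$ matters; so after henselizing the base and choosing the fibre coordinate on $E$ so that the blown-up point is the node $(c_0,[0:1])=\langle x,y\rangle$, I may assume $E=\#P^1_C$ with $C=\Spec A$ and that $X\in\@N$ is the blowup of this single node. By Lemma \ref{lemma blowups description} the fibre of $X\to C$ over $c_0$ is then the chain $\ell_{-\infty},\ell_0,\ell_1,\ell_\infty$, with $\ell_0$ the proper transform of the fibre (parameter $1/y$) and $\ell_1$ the exceptional line (parameter $x/y$). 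The first step is to record that, since $C$ has genus $>0$ and is therefore $\#A^1$-rigid, the composite $U\times\#A^1\to X\to C$ of any homotopy factors through the projection to $U$; hence every homotopy, and so every chain-homotopy class, lies over a single fixed morphism $\gamma\colon U\to C$. This reduces the determination of $\@S(X)(U)$ to classifying, for each $\gamma$, the chain-homotopy classes of sections of $X\to C$ lying over $\gamma$.

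The technical heart is to produce an explicit complete invariant of naive chain-homotopy for such sections, which is the content of Proposition \ref{proposition S single point}. For a section $f$ with $f^*x=\gamma^*x\in\mathfrak{m}_R$ and $f^*y$ in the fraction field of $R$, the location of $f(u_0)$ in the chain is read off from the divisibility relations between $f^*x$ and $f^*y$: the closed point lands on $\ell_0^\circ$, on the node $\ell_0\cap\ell_1$, on $\ell_1^\circ$, or on the node $\ell_1\cap\ell_\infty$ according to whether $f^*y$ is a unit, $f^*y$ strictly divides $f^*x$, $f^*y$ and $f^*x$ are associates, or $f^*x$ strictly divides $f^*y$; when $f$ lands on $\ell_1^\circ$ the residual unit $f^*(y/x)\in R^\times$ records the position along $\ell_1$. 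The invariant attached to $f$ is this location together with the class of the residual datum. The forward direction, that two sections with the same invariant are chain-homotopic, I would prove by writing down explicit one-parameter families inside the fibres of $X\to C$ over $\gamma$ and checking, via the universal property of the blowup, that each lifts to a morphism $U\times\#A^1\to X$; the only constraint encountered is that the pulled-back ideal $\langle\gamma^*x,\,h^*y\rangle$ be invertible, i.e. locally principal, on the total space.

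The reverse direction is the main obstacle, and it is exactly where the dimension of $U$ enters. Given a homotopy $h\colon U\times\#A^1\to X$ over $\gamma$, restricting to the two $\#A^1$-sections and analysing the vanishing loci of $h^*x$ and $h^*y$ on $U\times\#A^1$ shows that the location and residual datum can change only in ways permitted by the local principality of $\langle\gamma^*x,\,h^*y\rangle$ in $R[T]$; controlling this carefully gives that the invariant is a chain-homotopy invariant, which completes the description of $\@S(X)(U)$. For part (a), where $R$ is a henselian discrete valuation ring, $R[T]$ is regular of dimension $2$ and the ideals $\langle\gamma^*x,\,h^*y\rangle$ that arise become principal after the unavoidable localizations; consequently the residual-unit datum carries no homotopy information and the invariant collapses to the bare location in the chain. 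Comparing this collapsed description with the description of $\@S^2(X)(U)=\pi_0^{\#A^1}(X)(U)$ obtained in \cite{Balwe-Sawant-ruled} (using also the bijection \eqref{eqn Asok-Morel} applied to the residue and fraction fields of $R$) then yields the equality $\@S(X)(U)=\@S^2(X)(U)$ of part (a).

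For part (b) I would exhibit the failure over a two-dimensional base. Taking $U=\Spec R$ with $R$ a two-dimensional regular henselian local $k$-algebra with regular parameters $\pi,s$, and $\gamma$ with $\gamma^*x=\pi$, I would consider the two sections landing on $\ell_1^\circ$ given by $f_1^*y=\pi$ and $f_2^*y=\pi(1+s)$; both lift to $X$ since $\langle\pi,\pi\rangle$ and $\langle\pi,\pi(1+s)\rangle$ are principal, and their residual units are $1$ and $1+s$. By the explicit description these represent distinct classes in $\@S(X)(U)$, the obstruction to connecting them being the non-principality of the relevant height-two ideal of $R[T]$ produced by any fibrewise family, whereas by the already-known description of $\pi_0^{\#A^1}(X)$ from \cite{Balwe-Sawant-ruled} they coincide in $\@S^2(X)(U)=\pi_0^{\#A^1}(X)(U)$. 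Hence the epimorphism $\@S(X)\to\@S^2(X)$ fails to be injective on $U$-sections and so is not an isomorphism of Nisnevich sheaves; by Remark \ref{remark sing a1-local} this also gives the non-$\#A^1$-locality of $\Sing X$. I expect the delicate points to be the precise bookkeeping of how the residual unit transforms under the node-to-node homotopies used in the forward direction, and the verification that the chosen $f_1,f_2$ are genuinely $\#A^1$-homotopic while remaining naively distinct.
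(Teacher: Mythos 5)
Your overall framing --- pass to the local model $\#P^1_C$, use $\#A^1$-rigidity of $C$ to fix $\gamma\colon U\to C$, and classify sections over $\gamma$ by where they send the closed point --- is the same as the paper's. But the ``complete invariant'' you propose (stratum of the special fibre plus residual unit $f^*(y/x)$) is not the right one, and this breaks both halves of your argument. The residual unit of a section landing on the \emph{open} part of the exceptional line carries no naive-homotopy information over \emph{any} smooth henselian local $U$, not just over a DVR: if $r_0=\gamma^*\omega$ divides both $r_1$ and $r_2$, the linear homotopy $Y/Z\mapsto r_1(1-T)+r_2T$ pulls the centre $\langle r_0,Y\rangle$ back to the principal ideal $\langle r_0\rangle$ and hence lifts to the blowup (Lemma \ref{lemma easy A1-homotopy classes}). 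This kills your part (b) counterexample outright: with $\gamma^*x=\pi$, your sections $y\mapsto\pi$ and $y\mapsto\pi(1+s)$ are joined by $y\mapsto\pi(1+sT)$, which lifts, so they are already naively $\#A^1$-homotopic and cannot separate $\mathcal{S}(X)$ from $\mathcal{S}^2(X)$.

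The genuine obstruction lives only at the node, i.e.\ for sections $\beta_{r_1},\beta_{r_2}$ with $r_i\mid r_0$ but $r_0\nmid r_i$, and it is finer than ``same location'': Proposition \ref{proposition S single point} shows these are chain homotopic iff $r_1/r_2$ is a unit and $r_1/r_2-1\in\sqrt{\langle r_1\rangle}+\sqrt{\langle r_0/r_1\rangle}$, whereas the condition governing $\mathcal{S}^2$ involves $\sqrt{\langle r_1,\,r_0/r_1\rangle}$. The entire theorem rests on comparing these two ideals: they coincide when $R$ is a DVR (giving (a)), but the inclusion is strict in dimension two, e.g.\ $r_0=x(y^2+x)$, $r_1=x$, $r_2=x(1+y)$ over $k[x,y]^h_{\langle x,y\rangle}$, where $r_2/r_1-1=y\notin\langle x,y^2\rangle=\sqrt{\langle x\rangle}+\sqrt{\langle y^2+x\rangle}$ --- note the centre must be chosen non-reduced/non-principalizable in this way; a square-free $r_0$ like your $\pi$ cannot work. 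Your proposal never isolates this algebraic condition, and your part (a) claim that the invariant ``collapses to the bare location'' is also false over a DVR ($\beta_{\pi}$ and $\beta_{2\pi}$ with $r_0=\pi^2$ both hit the node yet are distinct in both $\mathcal{S}(X)(U)$ and $\pi_0^{\#A^1}(X)(U)$); part (a) holds because the two radical conditions agree there, not because the invariant trivializes.
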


Along the course of proving Theorem \ref{theorem single point}, we will give a complete description of $\#A^1$-chain homotopy classes of a smooth henselian local scheme $U$ into $X$.

Fix a smooth henselian local ring $(R, \mathfrak{m})$, $U = \Spec R$. We fix a morphism $\gamma\colon  U \to C$ and consider pullbacks of $X$ and $E$ with respect to $\gamma$ which we denote by $X_{\gamma}$ and $E_{\gamma}$. Thus $X_{\gamma}$ is obtained by blowing up a closed subscheme of $E_{\gamma}$. But since a $\#P^1$-bundle is \'etale locally trivial, we see that $E_{\gamma} \simeq \#P^1_U$. Thus we may assume that $E \to C$ is the trivial $\#P^1$-bundle $\#P^1_C$. 

Thus, let us assume that $E  = \#P^1_C = \#P^1_k \times C$ and $X$ has been obtained from $\#P^1_C$ by blowing up the point $p:= (c_0, (0:1))$ where $c_0$ is a closed point in $C$. Suppose the image of $\gamma\colon  U \to C$ does not contain $c_0$. Then $X_{\gamma}$ is isomorphic to $\#P^1_U$. On the other hand, if $\gamma$ maps the whole of $U$ into the point $c_0$, then $X_{\gamma}  = U \times T$ where $T$ consists of two copies of $\#P^1_k$ intersecting transversely in a single point. In both these cases, any two sections of $X_{\gamma} \to U$ are $\#A^1$-chain homotopic and thus the required result follows immediately. Thus we may now focus on the case in which $\gamma$ maps the closed point of $U$ to $c_0$ but does not map the whole of $U$ into $c_0$.

Let $\omega$ be a uniformizing parameter in the ring $\@O_{C,c_0}$. Then $X_{\gamma}$ is obtained from $\#P^1_U = \Proj R[Y,Z]$ by blowing up the closed subscheme corresponding to the homogeneous ideal $\< \gamma^*(\omega), Y\>$. The remainder of this section will be devoted to analyzing the $\#A^1$ chain homotopy classes in $X_{\gamma}(U)$, where we view $X_{\gamma}$ as a $U$-scheme (thus, the projections of all the $\#A^1$-chain homotopies to $U$ must be constant).  We now set up some notation to analyze the scheme $X_{\gamma}$.

\begin{notation}
\label{notation blowup}
The following notation will stay in effect for the remainder of this section:
\begin{enumerate}[label=$(\alph*)$]
\item Let $R$ denote the henselization of a ring of the form $\@O_{V,v}$ where $V$ is a smooth scheme over $k$ and $v$ is a point of $V$. Let $\mathfrak{m}$ be the maximal ideal of $R$ and let $\kappa = R/\mathfrak{m}$. We will denote $\Spec R$ by $U$ and the closed point of $U$ by $u$. For $r \in R$, $U(r)$ will denote the closed subscheme of $r$ corresponding to the ideal $\<r\>$. 

\item $Y, Z$ will denote the homogeneous coordinates on $\#P^1_R = \Proj R[Y,Z]$. 

\item For any $r_0 \in \mathfrak{m} \backslash \{0\}$, we define
\begin{eqnarray*}
I(r_0) & :=  &  \text{ideal sheaf on $\#P^1_U$ corresponding to the}\\
        &    &  \text{homogeneous ideal } \<r_0, Y\> \text{.}
\end{eqnarray*}
Let $T(r_0)$ denote the closed subscheme of $\#P^1_U$ corresponding to this ideal sheaf. Let $\theta_{r_0}\colon  X(r_0) \to \#P^1_U$ denote the blowup of $\#P^1_U$ at $T(r_0)$.
\end{enumerate} 
\end{notation}

We wish to examine the sections of morphisms $X(r_0) \to U$. The scheme $\#P^1_U$ is covered by affine patches isomorphic to $\#A^1_U$ given by the conditions $Y \neq 0$ and $Z \neq 0$. We will denote these by $\Spec R[Y/Z]$ and $\Spec R[Z/Y]$ respectively. Using the fact that $U$ is a local scheme it is easy to see that any section of $\#P^1_U$ factors through $\Spec R[Y/Z]$ or $\Spec R[Z/Y]$. The sections of $\Spec R[Y/Z] \to U$ are given by $R$-algebra homomorphisms $R[Y/Z] \to R$. Such a homomorphism is determined by a choice for the image of $Y/Z$. 

\begin{notation}
We denote by $\beta_r\colon  U \to \Spec R[Y/Z] \to \#P^1_U$ the section of $\#P^1_U \to U$ determined by $Y/Z \mapsto r$. Similarly, we denote by $\alpha_r\colon  U \to \Spec R[Z/Y] \to \#P^1_U$ the section of $\#P^1_U \to U$ determined by $Z/Y \mapsto r$. Thus we see that if $r$ is a unit in $R$, then $\alpha_r = \beta_{1/r}$.
\end{notation}

A section of $X(r_0) \to U$ can be composed with $X(r_0) \to \#P^1_U$ to give a section of $\#P^1_U \to U$. Of course, not every section of $\#P^1_U \to U$ can be lifted to give a section of $X(r_0) \to U$.  Since $r_0 \neq 0$, any section of $\#P^1_U \to U$ maps the generic point of $U$ outside the scheme $T(r_0)$.  As $X(r_0) \to \#P^1_U$ is an isomorphism outside $T(r_0)$, it follows that if a lift of a section of $\#P^1_U \to U$ to $X(r_0)$ exists, then it must be unique. 

\begin{notation}
Let $r_0 \in \mathfrak{m} \backslash \{0\}$. For $r \in R$, if a lift of $\alpha_r\colon  U \to \#P^1_U$ to $X(r_0)$ exists, it will be denoted by $\alpha_r^{r_0}$. Similarly, we define  $\beta^{r_0}_r$ to be the lift of $\beta_r$ to $X(r_0)$, if it exists. \end{notation}

A section $\alpha\colon  U \to \#P^1_U$ of $\#P^1_U \to U$, lifts to $X(r_0)$ if and only if the ideal sheaf $\alpha^*(I(r_0))$ is principal. Since $U$ is local, this ideal sheaf corresponds to an ideal of $R$. Unless $\alpha$ is of the form $\beta_r$ where $r \in \mathfrak{m}$, we have $\alpha^*(I(r_0)) = \@O_{U}$. When $\alpha = \beta_r$ for $r \in \mathfrak{m}$, the ideal sheaf $\alpha^*(I(r_0))$ corresponds to the ideal $\<r_0,r\>$ in $R$. 

\begin{lemma}
\label{lemma lifts to X(r_0)}
Let $r_0 \in \mathfrak{m} \backslash \{0\}$. Let $\alpha\colon  U \to \#P^1_U$ be a section of $\#P^1_U \to U$ which admits a lift to $X(r_0)$. Then one of the following must hold:
\begin{enumerate}[label=$(\alph*)$]
\item $\alpha^*(I(r_0)) = \<1\>$. In this case, $\alpha$ is of the form $\alpha_r$ where $r \in R$. 

\item $\alpha^*(I(r_0)) = \<r_0\>$. In this case, $\alpha$ is of the form $\beta_r$ where $r_0|r$. 

\item $\alpha^*(I(r_0)) = \<r\>$ for some $r \in R$ such that $r|r_0$ but $r_0 \notdivides r$. In this case, $\alpha$ is of the form $\beta_{r^{\prime}}$ where $r^{\prime}/r$ is a unit in $R$.
\end{enumerate}
\end{lemma}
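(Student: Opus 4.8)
The plan is to reduce the statement to a short computation in the local ring $R$, using the two facts established just before the lemma: that a section $\alpha$ of $\#P^1_U \to U$ lifts to $X(r_0)$ if and only if the ideal $\alpha^*(I(r_0))$ is principal, and that this ideal equals $\@O_U$ unless $\alpha = \beta_r$ with $r \in \mathfrak{m}$, in which case it is $\<r_0, r\>$. Recall also that every section is of the form $\alpha_r$ or $\beta_r$ for some $r \in R$, and that $\beta_r = \alpha_{1/r}$ when $r$ is a unit. Thus the proof is a case analysis on $\alpha$, and the hypothesis that $\alpha$ lifts enters only through the principality of $\alpha^*(I(r_0))$.

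First I would dispose of the case in which $\alpha$ is \emph{not} of the form $\beta_r$ with $r \in \mathfrak{m}$. Then $\alpha^*(I(r_0)) = \@O_U = \<1\>$, and $\alpha$ is either literally some $\alpha_r$, or else $\beta_r$ with $r$ a unit, hence $\alpha_{1/r}$; either way $\alpha$ has the form $\alpha_r$ with $r \in R$. This is precisely alternative $(a)$, and it needs nothing beyond the hypotheses already in place.

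The substance lies in the remaining case $\alpha = \beta_r$ with $r \in \mathfrak{m}$, where $\alpha^*(I(r_0)) = \<r_0, r\>$. Since $\alpha$ lifts, this ideal is principal, say $\<r_0, r\> = \<d\>$. The key point is that $R$ is a local integral domain — indeed $R$ is regular local, being the henselization of the local ring at a smooth point — and this is all the argument requires. Writing $r_0 = ds$ and $r = dt$ (possible as $d$ divides both generators) and using $d \in \<r_0, r\>$ to write $d = a r_0 + b r = d(as + bt)$, cancellation of the nonzero element $d$ in the domain $R$ yields $as + bt = 1$, so $\<s, t\> = R$. Since $R$ is local, one of $s, t$ is a unit. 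This passage from principality to ``$s$ or $t$ is a unit'', where locality is used essentially, is the heart of the matter; I expect it, rather than any later step, to be the only place demanding genuine care.

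I would then read off the two remaining alternatives directly in terms of the generator $r$ of the section. If $s$ is a unit, then $\<r_0, r\> = \<d\> = \<r_0\>$, so $\alpha^*(I(r_0)) = \<r_0\>$ and $r_0 | r$ (because $r \in \<r_0\>$); this is alternative $(b)$. Otherwise $t$ is a unit, whence $\<d\> = \<r\>$ and so $\alpha^*(I(r_0)) = \<r\>$; here $r | r_0$ while $r_0 \notdivides r$ — indeed $r_0 | r$ would give $r = r_0 v = ds v$, hence $t = sv$, forcing $s$ to be a unit, a contradiction. Since $\alpha = \beta_r$, this is exactly alternative $(c)$. No factoriality or gcd machinery is needed, and the degenerate overlap in which both $s$ and $t$ are units simply falls under $(b)$.
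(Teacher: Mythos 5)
Your proof is correct and follows essentially the same route as the paper: case $(a)$ is disposed of by noting that sections not of the form $\beta_r$ with $r\in\mathfrak m$ avoid $T(r_0)$, and cases $(b)$ and $(c)$ come down to the fact that in the local domain $R$ a principal ideal $\<r_0,r\>$ forces one of $r_0,r$ to divide the other. The paper merely asserts this last fact, while you spell out the (correct) cancellation argument $d=d(as+bt)$; otherwise the two arguments coincide.
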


\begin{proof}
We have $\alpha^*(I(r_0)) = \<1\>$ if and only if $\alpha$ factors through the complement of the scheme $T(r_0)$ of $\#P^1_U$. This is so if and only if $\alpha$ is of the form $\alpha_r$ where $r \in R$. This proves (a). 

We now have to consider the case that $\alpha$ is of the form $\beta_r\colon  U \to \#P^1_U$ for some $r \in \mathfrak{m}$. In this case, $\alpha^*(I(r_0)) = \<r_0, r\>$. Since $R$ is a local domain, this ideal is equal to $\<r_0\>$ if $r_0|r$ or is equal to $\<r\>$ if $r|r_0$. This proves (b) and (c).
\end{proof}

\begin{notation}
Let $r_0 \in \mathfrak{m} \backslash \{0\}$. Then for $r \in R$, $\@A_r^{r_0}$ denotes the set of sections $\alpha\colon  U \to \#P^1_U$ such that the ideal sheaf $\alpha^*(I(r_0))$ corresponds to the ideal $\<r\>$ in $R$. By Lemma \ref{lemma lifts to X(r_0)}, $\@A^{r_0}_r$ is non-empty if and only if $r|r_0$.
\end{notation}

\begin{remark}
\label{remark image of closed point}
We note a geometric interpretation of the families $\@A^{r_0}_r$. Observe that the preimage of $c_0$ under the morphism $X \to C$ is the union of two lines $L_1$ and $L_2$. Here $L_1$ is the proper transform of the line $\{c_0\} \times \#P^1_{k} \subset \#P^1_C$ and $L_2$ is the exceptional divisor.  A section $\alpha\colon  U \to \#P^1_U$ maps the closed point $u$ of $U$ into $L_2$ if and only if $r$ is not a unit.  It maps $u$ into $L_1$ if and only if $r|r_0$ and $r_0 \notdivides r$.

Thus, the elements of $\@A^{r_0}_1$ correspond to those lifts of $\gamma$ which map $u$ into $L_1  \backslash L_2$. The elements of $\@A^{r_0}_{r_0}$ are those sections which map $u$ into $L_2 \backslash L_1$. The set of sections which map $u$ to the point $L_1 \cap L_2$ is precisely the union of the sets $\@A^{r_0}_r$ where $r$ is not a unit, $r|r_0$ and $r_0 \notdivides r$. 

This indicates a certain symmetry between the sets $\@A^{r_0}_1$ and $\@A^{r_0}_{r_0}$. Indeed, these two sets can be transformed into each other by means of an \emph{elementary transformation}.
\end{remark}

We will use the following special case of \cite[Proposition 3.7]{Balwe-Sawant-ruled}.

\begin{lemma}
\label{lemma local generator ghost homotopy}
Let $r_0 \in \mathfrak{m} \backslash \{0\}$.  Let $\alpha$ and $\alpha^{\prime}$ be sections of $\#P^1_U \to U$ which are connected by an $\#A^1$-chain homotopy $\@H$. Then there exists $r \in R$ such that $r|r_0$ and $\alpha, \alpha^{\prime} \in \@A^{r_0}_r$.  Moreover, in this case $h_{\@H}^*(I(r_0))$ is generated by $f^*_{\@H}(r)$. 
\end{lemma}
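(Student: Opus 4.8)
The plan is to reduce the chain homotopy $\@H = (h_1,\dots,h_n)$ to its individual pieces, to compute the pullback of the ideal sheaf $I(r_0)$ along each piece explicitly, and then to exploit that $R[t]$ is a unique factorization domain. The essential hypothesis is that the homotopy lives on $X(r_0)$: each $h_i$ factors as $U \times \#A^1 \xrightarrow{\~h_i} X(r_0) \xrightarrow{\theta_{r_0}} \#P^1_U$, and since $\theta_{r_0}^*(I(r_0)) = I(r_0)\cdot\@O_{X(r_0)}$ is invertible by the universal property of the blowup, the pullback $h_i^*(I(r_0))$ on $\Sp(h_i) = U \times \#A^1 = \Spec R[t]$ is invertible. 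I would treat the following single-homotopy statement as the heart of the matter: if $h\colon U \times \#A^1 \to \#P^1_U$ is a homotopy over $U$ (so with constant projection to $U$) for which $h^*(I(r_0))$ is invertible, then $h^*(I(r_0)) = \langle r\rangle$ for some constant $r \in R$ with $r \mid r_0$.

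To prove this, first note that $R$, being the henselization of a regular local ring, is a UFD and satisfies $\Pic(R[t]) = 0$; hence the $U$-morphism $h$ to $\#P^1_U = \Proj R[Y,Z]$ is given globally by a pair $[P:Q]$ with $P = h^*(Y)$, $Q = h^*(Z)$ in $R[t]$ and $\langle P, Q\rangle = R[t]$. A local computation on the two standard charts then yields
\[
h^*(I(r_0)) = \langle r_0, P\rangle \subseteq R[t].
\]
Indeed, on $\Spec R[t]_Q$ the ideal $I(r_0) = \langle r_0, Y/Z\rangle$ pulls back to $\langle r_0, P\rangle$ since $Q$ is a unit there, while on $\Spec R[t]_P$ it pulls back to the unit ideal, and these two opens cover $\Spec R[t]$ because $\langle P,Q\rangle = R[t]$. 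The decisive feature is that $r_0 \in R \subset R[t]$ is \emph{constant} in $t$, precisely because $h$ is a homotopy over $U$.

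Now set $r := \gcd(r_0, P)$ in the UFD $R[t]$. Every irreducible factor of the constant $r_0$ remains irreducible in $R[t]$, so $r$ is itself a constant, $r \in R$, and $r \mid r_0$. Writing $r_0 = r\rho$ and $P = rp$ with $\gcd(\rho, p) = 1$, we get $h^*(I(r_0)) = r\cdot\langle \rho, p\rangle$, and since $r$ is a non-zero-divisor the invertibility of $h^*(I(r_0))$ forces $\langle \rho, p\rangle$ to be invertible. This implication is where I expect the main difficulty to lie, and I would clear it as follows: if $\langle \rho, p\rangle$ were a proper ideal, then being invertible (hence locally principal by a non-zero-divisor) it would, by Krull's principal ideal theorem, possess a minimal prime of height one; in the UFD $R[t]$ such a prime is $\langle \pi\rangle$ for an irreducible $\pi$, which would then divide both $\rho$ and $p$, contradicting $\gcd(\rho, p) = 1$. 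Therefore $\langle \rho, p\rangle = R[t]$ and $h^*(I(r_0)) = \langle r\rangle$ with $r \mid r_0$. (The invertibility hypothesis is indispensable: for $h = [t:1]$ one computes the non-invertible ideal $\langle r_0, t\rangle$, whose two endpoints $[0:1]$ and $[1:1]$ genuinely lie in different sets $\@A^{r_0}_{r}$; such an $h$ does not lift to $X(r_0)$.)

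It remains to glue. Restricting $h_i^*(I(r_0)) = \langle r_i\rangle$ along $\sigma_0$ and $\sigma_1$ and using that $r_i$ is constant gives $(\sigma_0^*h_i)^*(I(r_0)) = \langle r_i\rangle = (\sigma_1^*h_i)^*(I(r_0))$, so both endpoints of $h_i$ lie in $\@A^{r_0}_{r_i}$ with $r_i \mid r_0$. The compatibility $\sigma_1^*h_i = \sigma_0^*h_{i+1}$ then forces $\langle r_i\rangle = \langle r_{i+1}\rangle$, so all the $r_i$ are associates of a single $r$; consequently $\alpha, \alpha' \in \@A^{r_0}_r$ with $r \mid r_0$. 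Finally, on the total space $\Sp(\@H) = \bigsqcup_i U \times \#A^1$ the sheaf $h_{\@H}^*(I(r_0))$ restricts to $\langle r\rangle = \langle f_{\@H}^*(r)\rangle$ on each component, which establishes that $h_{\@H}^*(I(r_0))$ is generated by $f_{\@H}^*(r)$.
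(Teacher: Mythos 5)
Your proof is correct. A point of comparison worth making explicit: the paper does not actually prove this lemma --- it is quoted as a special case of \cite[Proposition 3.7]{Balwe-Sawant-ruled} --- so what you have written is a self-contained replacement for an external citation. Two observations. First, you correctly restored the hypothesis that the chain homotopy $\@H$ lifts to $X(r_0)$ (equivalently, that each $h_i^*(I(r_0))$ is invertible on $\Sp(h_i)$). As literally stated the lemma omits this, and your example $h=[t:1]$, with $h^*(I(r_0))=\<r_0,t\>$ joining points of $\@A^{r_0}_{r_0}$ and $\@A^{r_0}_1$, shows the statement is false without it; the lifting hypothesis is clearly intended, since the lemma is only ever applied to homotopies in $X(r_0)$ (e.g.\ in the proof of Proposition \ref{proposition S single point}). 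Second, whereas the cited result of \cite{Balwe-Sawant-ruled} is a general statement about (ghost) homotopies into blowups along arbitrary centers, your argument is elementary and specific: $\Pic(\#A^1_U)=0$ lets you write $h=[P:Q]$ with $\<P,Q\>=R[t]$ and compute $h^*(I(r_0))=\<r_0,P\>$; the constancy of $r_0$ (i.e.\ the homotopy being over $U$) forces $r=\gcd(r_0,P)$ to lie in $R$ and divide $r_0$; and the UFD/Krull argument correctly shows that an invertible ideal $\<\rho,p\>$ with $\gcd(\rho,p)=1$ must be the unit ideal, so $h^*(I(r_0))=\<r\>$. This is precisely the toolkit the paper itself deploys later in the necessity half of Proposition \ref{proposition S single point} (trivializing the line bundle, primitivity and content arguments, $(r_1:q^n)=\<r_1\>$), so your route is very much in the spirit of the paper even though the paper chose to cite rather than argue. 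The evaluation at $t=0,1$, the chaining via $\sigma_1^*h_i=\sigma_0^*h_{i+1}$, and the final statement about $h_{\@H}^*(I(r_0))$ being generated by $f_{\@H}^*(r)$ are all handled correctly.
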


Lemma \ref{lemma local generator ghost homotopy} gives us a necessary condition for two elements $\beta_{r_1}$ and $\beta_{r_2}$ to be $\#A^1$-homotopic. However, we will see now that it is not strong enough. The following two results give a complete description of $\#A^1$-homotopy classes of the sections of $X(r_0) \to U$. 

\begin{lemma}
\label{lemma easy A1-homotopy classes}
Let $r_0 \in \mathfrak{m} \backslash \{0\}$. Any two elements of $\@A^{r_0}_r$ are $\#A^1$-homotopic via homotopies that lift to $X(r_0)$ if $r = 1$ or $r_0$.  
\end{lemma}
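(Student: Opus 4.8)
The lemma asserts that any two sections in $\@A^{r_0}_r$ are $\#A^1$-homotopic via homotopies lifting to $X(r_0)$, when $r = 1$ or $r = r_0$. Let me think about what these two families are geometrically.

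Recall $\@A^{r_0}_r$ = sections $\alpha: U \to \#P^1_U$ with $\alpha^*(I(r_0)) = \langle r \rangle$.

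When $r = 1$: these are sections $\alpha_r$ (the ones factoring through $\operatorname{Spec} R[Z/Y]$, away from $T(r_0)$), parametrized by $r \in R$, via $Z/Y \mapsto r$. These map $u$ into $L_1 \setminus L_2$.

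When $r = r_0$: these are the $\beta_r$ with $r_0 \mid r$, i.e. $Y/Z \mapsto r$ where $r = r_0 \cdot s$. These map $u$ into $L_2 \setminus L_1$ (the exceptional divisor).

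**The strategy.** For $r = 1$: sections $\alpha_s$ are given by $Z/Y \mapsto s$, $s \in R$. Since $R[Z/Y]$ is an affine space over $U$, the obvious homotopy $h(t) = st$ — i.e. $Z/Y \mapsto s \cdot t$ where $t$ is the $\#A^1$-coordinate — connects $\alpha_s$ to $\alpha_0$ (the section $Z/Y \mapsto 0$, which is $\ell_\infty \cap$ the fiber... wait, let me reconsider which point).

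Let me think again. The homotopy $h: U \times \#A^1 \to \#P^1_U$ given by $Z/Y \mapsto s \cdot t$ (linear in $t$) has $\sigma_1^* = \alpha_s$ and $\sigma_0^* = \alpha_0$. So every $\alpha_s$ is homotopic to the single section $\alpha_0$, hence all are mutually homotopic. The issue is: does this homotopy **lift** to $X(r_0)$?

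To check lifting: I need $h^*(I(r_0))$ to be principal on $U \times \#A^1$. By Lemma \ref{lemma local generator ghost homotopy}'s spirit, since $\alpha_s \in \@A^{r_0}_1$ means $\alpha_s^*(I(r_0)) = \langle 1 \rangle$, the section factors through the complement of $T(r_0)$, where the blowup is an isomorphism. I should verify the whole homotopy image avoids $T(r_0)$: the homotopy $Z/Y \mapsto st$ lands in the chart $\operatorname{Spec} R[Z/Y]$, disjoint from $T(r_0) \subset \{Z/Y \text{ finite near } 0\}$... I need $T(r_0)$ to sit in the complementary chart. Indeed $T(r_0)$ is cut out by $\langle r_0, Y \rangle$, so on $Z \neq 0$ it is $\langle r_0, Y/Z \rangle$ — near $Y/Z = 0$. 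The $\alpha$-chart is $Z/Y$, and $\alpha_s$ has $Y/Z = 1/s$ which may be non-finite. The cleanest check: $\alpha_s^*(I(r_0)) = \langle 1 \rangle$ for all $s$ (this holds as long as the homotopy stays in type (a)), so lifting is automatic and unique.

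**Main obstacle.** The delicate point is the $r = r_0$ case, and ensuring the homotopies genuinely lift throughout, not just at endpoints. For $r = r_0$: sections are $\beta_{r_0 s}$, $s \in R$, i.e. $Y/Z \mapsto r_0 s$. The linear homotopy $Y/Z \mapsto r_0 s t$ connects $\beta_{r_0 s}$ to $\beta_0$. Now $\beta_0^*(I(r_0)) = \langle r_0, 0 \rangle = \langle r_0 \rangle$, good. I must check $h^*(I(r_0)) = \langle r_0, r_0 s t \rangle = \langle r_0 \rangle$ is principal on $U \times \#A^1$ — yes, generated by $r_0$. So the homotopy lifts to $X(r_0)$, consistent with Lemma \ref{lemma local generator ghost homotopy} (generator pulled back from $f_{\@H}^*(r_0)$). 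The real subtlety is confirming that the lift of a homotopy whose pullback ideal is principal exists and is a genuine morphism to $X(r_0)$ — this follows from the universal property of blowup: $X(r_0) \to \#P^1_U$ is the blowup along $T(r_0)$, so a morphism $U \times \#A^1 \to \#P^1_U$ lifts iff the pullback of $I(r_0)$ is invertible (an effective Cartier divisor).

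**Plan in order.**
First, reduce to showing each family member is homotopic to a distinguished section ($\alpha_0$ for $r=1$; $\beta_0$ for $r = r_0$), using that $\#A^1$-homotopy is an equivalence relation.
Second, write the explicit linear homotopy in the appropriate affine chart ($Z/Y \mapsto st$ or $Y/Z \mapsto r_0 s t$).
Third, compute $h^*(I(r_0))$ on $U \times \#A^1$ and verify it is invertible — trivially $\langle 1 \rangle$ in the first case, $\langle r_0 \rangle$ in the second.
Fourth, invoke the universal property of the blowup to conclude the homotopy lifts (uniquely) to $X(r_0)$. The hardest part is the bookkeeping in the $r = r_0$ case to ensure the pullback ideal stays principal over all of $U \times \#A^1$ (including $t = 0$, where one must confirm $\beta_0$ is itself in $\@A^{r_0}_{r_0}$ and lifts), rather than only generically.
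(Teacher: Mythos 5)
Your proof is correct and follows essentially the same route as the paper: explicit affine-linear homotopies (in the $Z/Y$-chart for $r=1$, in the $Y/Z$-chart for $r=r_0$), a check that the pullback of $I(r_0)$ along the homotopy is principal ($\langle 1\rangle$ resp. $\langle r_0\rangle$), and the universal property of the blowup to lift. The only cosmetic difference is that you contract every section to a basepoint ($\alpha_0$ resp. $\beta_0$) and compose two homotopies, whereas the paper interpolates linearly between the two given sections in one step; this changes nothing of substance.
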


\begin{proof}
First consider the case $r = 1$. Suppose $\alpha$, $\alpha^{\prime}$ are in $\@A^{r_0}_1$. Then it is easy to see that $\alpha = \alpha_{r_1}$ and $\alpha^{\prime} = \alpha_{r_2}$ for some units $r_1, r_2 \in R^{\times}$. Then these two elements are homotopic in $\#P^1_U$ via the homotopy given by the $R$-algebra homomorphism $$R[Z/Y] \to R[T], \quad Z/Y \mapsto r_1^{-1}(1-T) + r_2^{-1}T.$$  Note that we are using the parameter $Z/Y$ instead of $Y/Z$, so $\alpha_{r_i}^*(Z/Y) = r_i^{-1}$ for $i=1,2$.  This defines a morphism $\#A^1_U \to \#P^1_U$, which factors through the open immersion $\#P^1_U \backslash T(r_0) \hookrightarrow \#P^1_U$ and hence, lifts to $X(r_0)$. 

The case $r=r_0$ can be deduced from the case $r = 1$ by means of an elementary transformation (as mentioned in Remark \ref{remark image of closed point}). However, we will simply write out an explicit homotopy. If $\alpha$, $\alpha^{\prime}$ are in $\@A^{r_0}_{r_0}$, they are of the form $\beta_{r_1}$ and $\beta_{r_2}$ where $r_0|r_1$ and $r_0|r_2$. Then consider the homotopy $h\colon  \#A^1_U \to \#P^1_U$ given by the $R$-algebra homomorphism $R[Y/Z] \to R[T]$, $Y/Z \mapsto r_1(1-T) + r_2T$. Then $h^*(I(r_0))$ is the sheaf corresponding to the ideal $\<r_0, r_1(1-T) + r_2T\> = \<r_0\>$ which is principal. Thus $h$ lifts to $X(r_0)$ as claimed. 
\end{proof}

\begin{proposition}
\label{proposition S single point}
Let $r_0 \in \mathfrak{m} \backslash \{0\}$. Suppose $r_1, r_2 \in R$ are non-units such that $r_i | r_0$ but $r_0 \notdivides r_i$ for $i=1,2$. Let $r' = r_0/r_1$. Then $\beta_{r_i}^{r_0}$ for $i = 1,2$ are $\#A^1$-chain homotopic in $X(r_0)$ if and only if the following two conditions hold:
\begin{enumerate}[label=$(\alph*)$]
\item $r_1/r_2$ is a unit. 

\item $r_1/r_2 -1 \in \sqrt{\<r_1\>} + \sqrt{\<r'\>} \subset \sqrt{\<r_1, r'\>}$. 
\end{enumerate}
\end{proposition}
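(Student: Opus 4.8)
The plan is to identify the entire family $\@A^{r_0}_{r_1}$ with the unit group $R^{\times}$ and to show that the $\#A^1$-chain-homotopy relation on it is exactly translation by the subgroup
$G := 1 + \left( \sqrt{\<r_1\>} + \sqrt{\<r'\>} \right)$ of $R^{\times}$; condition $(b)$ will then be precisely the assertion $r_2/r_1 \in G$.

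First I would record the preliminary reductions. Since $r_1 \mid r_0$ but $r_0 \notdivides r_1$, both $r_1$ and $r' = r_0/r_1$ lie in $\mathfrak{m} \setminus \{0\}$, so $\sqrt{\<r_1\>} + \sqrt{\<r'\>}$ is an ideal contained in $\mathfrak{m}$ and $G$ is genuinely a subgroup of $R^{\times}$. By Lemma~\ref{lemma lifts to X(r_0)}, a section $\beta^{r_0}_\rho$ lies in $\@A^{r_0}_{r_1}$ precisely when $\rho$ is an associate of $r_1$, so $\@A^{r_0}_{r_1} = \{ \beta^{r_0}_{v r_1} : v \in R^{\times} \}$. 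Condition $(a)$ is then immediate from Lemma~\ref{lemma local generator ghost homotopy}: any chain homotopy connecting $\beta^{r_0}_{r_1}$ and $\beta^{r_0}_{r_2}$ keeps both endpoints in a common family $\@A^{r_0}_r$, forcing $\<r_1\> = \<r_2\>$. Writing $r_2 = u r_1$ with $u \in R^{\times}$ and using $r_1/r_2 - 1 = -(u-1)u^{-1}$, condition $(b)$ is equivalent to $u - 1 \in \sqrt{\<r_1\>} + \sqrt{\<r'\>}$, that is, $u \in G$.

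For sufficiency I would use the elementary algebraic fact that $G = (1 + \sqrt{\<r_1\>})(1 + \sqrt{\<r'\>})$, so that any $u \in G$ factors as $u = v_1 v_2$ with $v_1 \in 1 + \sqrt{\<r_1\>}$ and $v_2 \in 1 + \sqrt{\<r'\>}$, and then exhibit two explicit and mutually symmetric homotopies. The first, $[Y : Z] \mapsto [\, r_1\big(1 + (v_2 - 1)T\big) : 1 \,]$, connects $\beta^{r_0}_{r_1}$ to $\beta^{r_0}_{v_2 r_1}$: its pullback of $I(r_0)$ is $\<r_0,\, r_1(1 + (v_2-1)T)\> = r_1 \<r',\, 1 + (v_2 - 1)T\> = \<r_1\>$, which is principal because $v_2 - 1 \in \sqrt{\<r'\>}$, so the homotopy lifts to $X(r_0)$. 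The second, $[Y : Z] \mapsto [\, v_2 r_1 : 1 + (v_1^{-1} - 1) T \,]$, is the same construction with the roles of $Y$ and $Z$ (equivalently of $r_1$ and $r'$) interchanged; it connects $\beta^{r_0}_{v_2 r_1}$ to $\beta^{r_0}_{v_1 v_2 r_1} = \beta^{r_0}_{r_2}$, its pullback of $I(r_0)$ being $\<r_0(1 + (v_1^{-1}-1)T),\, v_2 r_1\> = \<r_1\>$ because $v_1^{-1} - 1 \in \sqrt{\<r_1\>}$. Concatenating the two gives the required chain homotopy.

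The necessity of $(b)$ is the part I expect to be the main obstacle, and it is where the commutative-algebra content lies. By Lemma~\ref{lemma local generator ghost homotopy}, every section occurring in a chain homotopy between two elements of $\@A^{r_0}_{r_1}$ again lies in $\@A^{r_0}_{r_1}$, so since $G$ is a subgroup it suffices to treat a single homotopy $h$ connecting $\beta^{r_0}_\rho$ and $\beta^{r_0}_{w\rho}$ with $\rho$ an associate of $r_1$ and $w \in R^{\times}$, and to show $w \in G$. Composing $h$ with $X(r_0) \to \#P^1_U$ gives a morphism $\Spec R[T] \to \#P^1_U$ over $U$, which (as $\Pic \#A^1_R = 0$) I may write as $[F : H]$ with $F, H \in R[T]$ and $\<F, H\> = \<1\>$. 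The condition that $h$ lifts to $X(r_0)$ is that the ideal $\<r_0 H, F\>$ be principal, and Lemma~\ref{lemma local generator ghost homotopy} pins its value to $\<\rho\>$. Hence $\rho \mid F$; writing $F = \rho \Phi$ and setting $\rho^{\prime} := r_0/\rho$ (an associate of $r'$) gives $\<\rho^{\prime} H, \Phi\> = \<1\>$, whence $\<\rho^{\prime}, \Phi\> = \<1\>$, while $\<F, H\> = \<1\>$ gives $\<\rho, H\> = \<1\>$. These say exactly that $\Phi$ and $H$ are units modulo $\sqrt{\<r'\>}$ and modulo $\sqrt{\<r_1\>}$ respectively, so that $\Phi(1) \equiv \Phi(0) \pmod{\sqrt{\<r'\>}}$ and $H(1) \equiv H(0) \pmod{\sqrt{\<r_1\>}}$, with $H(0), H(1) \in R^{\times}$. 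Reading off the endpoints yields $\Phi(0) = H(0)$ and $\Phi(1) = w\,H(1)$, and combining the two congruences gives $(w - 1)H(0) \in \sqrt{\<r_1\>} + \sqrt{\<r'\>}$; since $H(0)$ is a unit, $w \in G$. Telescoping over the chain (using again that $G$ is a subgroup) gives $u = r_2/r_1 \in G$, which is condition $(b)$. The delicate points to get right are the precise form of the lift condition as principality of $\<r_0 H, F\>$ in $R[T]$ with its generator forced to be $\<\rho\>$, and the verification that the endpoint values $H(0), H(1)$ are units obeying the stated relations; once these are in place, extracting the two congruences and telescoping is routine.
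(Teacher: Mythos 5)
Your argument is correct and follows the same overall strategy as the paper's proof: reduce to a single $\mathbb{A}^1$-homotopy, represent it by a coprime pair $(F,H)$ in $R[T]$ using the triviality of $\operatorname{Pic}(\mathbb{A}^1_U)$, factor $\rho$ out of $F$, deduce that $H$ and $\Phi=F/\rho$ are units modulo $\langle r_1\rangle$ and $\langle r'\rangle$ respectively, and evaluate at $T=0,1$; for sufficiency, concatenate two explicit homotopies through an intermediate section. The genuine divergence is in how you obtain $\rho\mid F$ in $R[T]$: the paper only uses principality of the pulled-back ideal on the chart $D(q)$ and therefore must prove $(r_1:q^n)=\langle r_1\rangle$ via the primitivity of $q$ (a Gauss-lemma computation), whereas you invoke the global assertion of Lemma~\ref{lemma local generator ghost homotopy} --- that $h^*(I(r_0))$ equals $\langle\rho\rangle$ as an ideal of $R[T]$ --- together with the identification $h^*(I(r_0))=\langle r_0H,F\rangle$ (valid because both sides restrict to the unit ideal on $D(F)$ and to $H^{-1}\langle r_0H,F\rangle=\langle r_0,F/H\rangle$ on $D(H)$). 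This is legitimate and bypasses the colon-ideal step entirely, at the cost of leaning more heavily on the cited lemma. On the sufficiency side, your factorization $1+\bigl(\sqrt{\langle r_1\rangle}+\sqrt{\langle r'\rangle}\bigr)=\bigl(1+\sqrt{\langle r_1\rangle}\bigr)\bigl(1+\sqrt{\langle r'\rangle}\bigr)$ and the two homotopies $[\,r_1(1+(v_2-1)T):1\,]$ and $[\,v_2r_1:1+(v_1^{-1}-1)T\,]$ are, up to relabelling, the paper's $f_2$ and $f_1$ (which use square-free parts instead of the multiplicative decomposition); packaging the target of condition $(b)$ as a subgroup $G$ of $R^{\times}$ makes the telescoping over the chain, which the paper leaves implicit, transparent. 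The only imprecision is calling the lifting criterion ``principality'' of $\langle r_0H,F\rangle$ rather than local principality, but since you immediately pass to the generator supplied by the lemma this affects nothing.
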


\begin{proof}
We first prove the necessity of the conditions.  Clearly, it suffices to treat the case where $\beta_{r_1}^{r_0}$ and $\beta_{r_2}^{r_0}$ are $\#A^1$-homotopic in $X(r_0)$.  

The fact that $r_1/r_2$ is a unit is already known to us by Lemma \ref{lemma local generator ghost homotopy}. An $\#A^1$-homotopy $h\colon  \#A^1_U \to \#P^1_U$ is given by the choice of an invertible sheaf on $\#A^1_U$ along with the choice of two generating sections. Since $U$ is essentially smooth, we have $\Pic(U) \simeq \Cl(U)$ and $\Pic(\#A^1_U) \simeq \Cl(\#A^1_U)$, where $\Cl$ denotes the group of Weil divisors.  The projection $\#A^1_U \to U$ induces an isomorphism $\Cl(U) \simeq \Cl(\#A^1_U)$.  Since $U$ is local, this implies that any invertible sheaf on $\#A^1_U$ is isomorphic to $\@O_{\#A^1_U}$. Thus we see that the homotopy $h$ can be given by two generating global sections of $\@O_{\#A^1_U}$, that is, two polynomials $p(T), q(T) \in R[T]$ such that $\<p(T),q(T)\>$ is the unit ideal. If $D(q)$ denotes the open subscheme of $\#A^1_U$ where $q$ is a unit, then $h$ maps $D(q)$ into the open subscheme $\#A^1_U = \Spec R[Y/Z] \subset \#P^1_U$ and the 
morphism $h|_{D(q)}$ is given by the $R$-algebra homomorphism $Y/Z \mapsto p(T)/q(T)$.  Notice that $p(0)/q(0) = r_1$ and $p(1)/q(1) = r_2$. By the first equality, we see that $r_1|p(0)$ and thus, in particular, $p(0)$ is a non-unit. However, $\<p(0),q(0)\>$ is the unit ideal in $R$ (since $\<p,q\>$ is the unit ideal in $R[T]$). Thus $q(0)$ is a unit in $R$. Thus $q(T)$ is a primitive polynomial, that is, its content $\cont(q)$, is a unit. 

Now suppose that the ideal sheaf $h^*(I(r_0))$ is locally principal. Since $R[T,q^{-1}]$ is a unique factorization domain, the ideal $h^*(I(r_0))(D(q))$ is principal.  By the formula defining $h$, this ideal is equal to $\<r_0, p/q\>$.  Since $h$ lifts to $X(r_0)$, we know that this ideal is actually equal to $r_1R[T,q^{-1}]$. Therefore, $$p \in r_1 R[T,q^{-1}] \cap R[T] = \bigcup_{n \geq 0}(r_1: q^n).$$  We claim that $(r_1:q^n) = \<r_1\>$ for every $n \geq 0$. Indeed, suppose $f \in (r_1:q^n)$. If $n = 0$, there is nothing to prove. So, we assume $n>0$. There exists an element $g(T) \in R[T]$ and an integer $n$ such that $r_1g = fq^n$ in $R[T]$. But then, $q^n|r_1g$ in $R[T]$. As $q$ is a primitive polynomial, $q^{n}| (r_1g/\cont(r_1g)) = g/\cont(g)$. In particular, $q^n|g$ and thus $r_1|f$ in $R[T]$, as claimed. This shows that $r_1|p$ in $R[T]$ and we write $p = r_1p'$ where $p' \in R[T]$.

The ideal $\<r_1p', q\>$ is a unit ideal. Thus, modulo $r_1$, we see that the polynomial $q(T)$ must be a unit. Hence, if $q(T) = q_0 + q_1T + \ldots$, we see that $q_0$ is a unit (as we have already seen before) and $q_i$ is nilpotent modulo $r_1$ for $i \geq 1$. 

In $R[T,q^{-1}]$, we have $\<r_0,r_1p^{\prime}/q\> = \<r_1\>$, which implies that $\<r^{\prime},p^{\prime}/q\> = \<1\>$.  Hence, there exists an integer $n \geq 0$ such that $q^n$ lies in the ideal $\<r^{\prime}, p^{\prime}\>$ in $R[T]$.  Since $\<p,q\> = 1$, we have $\<p^n,q^n\> = 1$, which implies that $\<(p^{\prime})^n,q^n\>=1$. Thus, $\<r^{\prime}, p^{\prime}\>$, which contains $\<(p^{\prime})^n,q^n\>$ is equal to the unit ideal. Thus if $p^{\prime}(T) = p_0^{\prime} + p_1^{\prime}T + \ldots$, we see that $p_0^{\prime}$ is a unit and $p_i^{\prime}$ is nilpotent modulo $r^{\prime}$ for $i \geq 1$. 

Using $p(1)/q(1) = r_2$, we see that $$r_2 = r_1\left( \dfrac{u_1 + t_1}{u_2 + t_2} \right),$$ where $u_i$ are units, $t_1$ is nilpotent modulo $r^{\prime}$ and $t_2$ is nilpotent modulo $r_1$. Using $p(0)/q(0) = r_1$, we see that $u_1/u_2 = 1$. Thus, we may assume that $u_1 = u_2 = 1$. Let $$t_3 = -\dfrac{t_2}{1 + t_2}.$$  Then we see that $t_3$ is nilpotent modulo $r_1$ and $r_2 = r_1(1 + t_1)(1 + t_3)$. Thus $$\dfrac{r_2}{r_1} - 1 = t_1 + t_3 + t_1t_3$$ and it is obvious that this element lies in $\sqrt{\<r_1\>} + \sqrt{\<r^{\prime}\>}$ as claimed. This proves the necessity of the conditions. 

We now prove the sufficiency. Let $s_1$ and $s^{\prime}$ be the square-free parts of $r_1$ and $r^{\prime}:= r_0/r_1$. Then $\sqrt{\<r_1\>} = \<s_1\>$ and $\sqrt{\<r'\>} =\<s'\>$. By $(b)$, we have $r_2/r_1 -1 = s_1 \delta + s' \delta'$, for some $\delta, \delta' \in R$.  We define $r_3 = r_1(1 + s_1 \delta_1)$. 

Consider the rational function $$f_1(T) = \dfrac{r_3}{1 + s_1 \delta_1 T}.$$ The ideal $\<r_3, 1 + s_1 \delta_1 T\>$ is the unit ideal since $s_1 \delta_1$ is nilpotent modulo $r_1$ and consequently, nilpotent modulo $r_3$. Thus $f_1$ defines a homotopy 
\[
h_1\colon  \#A^1_U \to \#P^1_U; Y/Z \mapsto \dfrac{r_3}{1 + s_1 \delta_1 T}.
\]
Since $f_1(0) = r_3$ and $f_1(1) = r_1$, this homotopy connects $\beta_{r_3}$ to $\beta_{r_1}$. Now, let $p$ be a point of $\#A^1_U$ and let $z \in U$ denote its image under the projection $\#A^1_U \to U$. We see that the ideal 
\[
h_1^*(I(r_0))_p = \left\<r_0, \frac{r_3 }{1 + s_1 \delta_1T} \right\>_p \subset R[T]_p 
\]
is principal if $1+s_1 \delta_1T$ is a unit at $p$, since $r_3|r_0$. If $1 + s_1 \delta_1 T$ is not a unit, then $s_1$, $\delta_1$ and $T$ are all units at $p$. Thus, $r_1$ is also a unit at $p$. Since $r_3$ is a unit multiple of $r_1$, we see that $r_3$ is a unit at $p$ and hence, $r_3/(1 + s_1 \delta_1 T) = \infty$ modulo $z$. Thus, $h(p) = (z,(1:0))$, which is not in the closed subscheme defined by $I(r_0)$. Therefore, $h_1^*(I(r_0))_p$ is the unit ideal in this case.  Consequently, $h_1^*(I(r_0))$ is a locally principal ideal, which allows us to conclude that $h_1$ lifts to $X(r_0)$. 

Consider $$f_2(T) = r_3 + r_1s^{\prime} \delta^{\prime}T = r_3 \left( 1 + 
\dfrac{s^{\prime} \delta^{\prime}T}{1 + s_1 \delta_1} \right).$$ This defines a homotopy 
\[
h_2\colon  \#A^1_U \to \#P^1_U; Y/Z \mapsto r_3 \left( 1 + 
\dfrac{s^{\prime} \delta^{\prime}T}{1 + s_1 \delta_1} \right).
\]
This homotopy lifts to $X(r_0)$ as the ideal $\<r_0, r_3( 1 + s^{\prime} \delta^{\prime}T(1 + s_1 \delta_1)^{-1})\>$ is principal (since $s^{\prime} \delta^{\prime}$ is nilpotent modulo $r_0/r_3 = r^{\prime}(1 + s_1\delta_1)$). We observe that $f_2(0) = r_3$ and $f_2(1) = r_2$ and thus, $h_2$ connects $\beta_{r_3}$ to $\beta_{r_2}$.  This completes the proof.
\end{proof} 

\subsection*{Proof of Theorem \ref{theorem single point}}
Let $\omega$ be a uniformizing parameter in the ring $\@O_{C,c_0}$. Then $X_{\gamma}$ is obtained from $\#P^1_U = \Proj R[Y,Z]$ by blowing up the closed subscheme corresponding to the homogeneous ideal $\< \gamma^*(\omega), Y\>$.  Write $r_0:=\gamma^*(\omega)$.  Lemma \ref{lemma easy A1-homotopy classes} and Proposition \ref{proposition S single point} give a classification of the $\#A^1$-homotopy classes of sections of $X_{\gamma} \to U$. 

By Lemma \ref{lemma easy A1-homotopy classes}, we are reduced to considering $\#A^1$-chain homotopy classes of sections of the form $\beta_r^{r_0}$, where $r \in \mathfrak m$ and $r|r_0$ but $r_0 \notdivides r$.  Set $r'=r_0/r$.  If $U = \Spec R$ is one-dimensional, then $R$ is a discrete valuation ring, and it can be immediately seen that 
\[
\sqrt{\<r\>} + \sqrt{\<r'\>} = \sqrt{\<r, r'\>}.
\]
By \cite[Proof of Theorem 5.1, Claims 1 and 2]{Balwe-Sawant-ruled}, it follows that $\@S(X)(U) = \@S^2(X)(U)$, for every smooth henselian local scheme $U$ of dimension $1$ over $k$, as desired.  

The fact that $\@S(X) \neq \@S^2(X)$ is shown by the following concrete example.  Let $\~R = k[x,y]_{\<x,y\>}^h$. Let $r_0 = x(y^2 + x)$, $r_1 = x$. Thus $r^{\prime}= y^2+x$. We have $$\sqrt{\<r_1\>} + \sqrt{\<r'\>} = \<x, y^2 + x\> = \<x,y^2\>,$$ while $\sqrt{\<r_1,r'\>} = \<x,y\>$.  We take $r_2 = x(1+y) = x + xy$. Thus $$r_2/r_1 - 1= y \notin \sqrt{\<r_1\>} + \sqrt{\<r'\>}.$$  By Proposition \ref{proposition S single point}, $\beta^{r_0}_{r_1}$ and $\beta^{r_0}_{r_2}$ are not $\#A^1$-chain homotopic.  But by \cite[Proposition 5.3, Claim 1 and Claim 2]{Balwe-Sawant-ruled}, they map to the same element in $\@S^n(X)(\Spec \~R)$, for any $n\geq 2$.  Therefore, $\@S(X)(\Spec \~R) \neq \@S^2(X)(\Spec \~R)$.  This completes the proof of Theorem \ref{theorem single point}.

\section{Proof of the main results}
\label{section general case and applications}

We begin by proving Theorem \ref{theorem main theorem intro} and Theorem \ref{theorem application intro} for nodal blowups.

\begin{theorem}
\label{theorem main theorem nodal}
Let $X \in \@N$ be a nodal blowup in the sense of Definition \ref{definition nodal blowups}.  
\begin{enumerate}[label=$(\alph*)$]
\item The natural map
\[
\@S(X)(U) \to \@S^2(X)(U) = \pi_0^{\#A^1}(X)(U)
\]
is a bijection, for every henselian local scheme $U$ over $k$ of dimension $\leq 1$.

\item $\@S(X) \nsimeq \@S^2(X)$.
\end{enumerate}

\end{theorem}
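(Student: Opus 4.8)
The plan is to reduce both parts to the single-point analysis of Section \ref{section single point blowup}, applied at each node of the chain of pseudo-lines lying over $c_0$. Following the proof of Theorem \ref{theorem single point}, I first fix a morphism $\gamma\colon U \to C$ and classify the $\#A^1$-chain homotopy classes of sections $U \to X$ over $\gamma$. If $\gamma(u) \neq c_0$ or $\gamma$ is constant, all sections of the pullback $X_\gamma \to U$ are chain homotopic and there is nothing to prove, so the essential case is $\gamma(u) = c_0$ with $\gamma$ non-constant. Over $c_0$ the fiber of $X \to C$ is the chain $\ell_{r_0}, \ldots, \ell_{r_{n+1}}$ of pseudo-lines described after Lemma \ref{lemma blowups description}, consecutive members meeting in a single node whose defining ideal is recorded there. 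The key structural observation is that the local geometry of $X$ at each node reproduces that of the single-point blowup $X(r_0)$ of Section \ref{section single point blowup}, with a local generator determined by the reduced fractions attached to the two pseudo-lines meeting at that node.

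For part (a), the case $\dim U = 0$ is the Asok--Morel bijection \eqref{eqn Asok-Morel}, valid because $X$ is proper. For $\dim U = 1$, with $R$ a henselian discrete valuation ring, I track a section $s\colon U \to X$ over $\gamma$ by the location of $s(u)$ in the chain of pseudo-lines. The nodal analogue of Lemma \ref{lemma easy A1-homotopy classes} shows that all sections sending $u$ into the interior of a fixed line $\ell_{r_i}$ are mutually chain homotopic, while the nodal analogue of Proposition \ref{proposition S single point}, obtained by transporting its proof through the local model at each node, characterizes chain homotopy of sections sending $u$ to a node by the square-root ideal condition there. The decisive point in the one-dimensional case is the identity $\sqrt{\<r\>} + \sqrt{\<r'\>} = \sqrt{\<r,r'\>}$, valid in a DVR, which collapses condition $(b)$ of Proposition \ref{proposition S single point} to condition $(a)$; the resulting enumeration of chain homotopy classes then matches the description of $\@S^2(X)(U) = \pi_0^{\#A^1}(X)(U)$ obtained in \cite{Balwe-Sawant-ruled}. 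The natural map, being surjective as $\@S(X) \to \@S^2(X)$ is an epimorphism, is thus the comparison between the two descriptions and hence a bijection.

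For part (b), take $X$ to have at least one line (the non-minimal case, which is the one relevant to Theorem \ref{theorem application intro}); then $X$ contains a node near which it is modeled by a single-point blowup. I would transport the explicit two-dimensional example from the proof of Theorem \ref{theorem single point}: with $U = \Spec k[x,y]_{\<x,y\>}^h$ and local generator $r_0 = x(y^2+x)$, the sections $\beta^{r_0}_{r_1}$ and $\beta^{r_0}_{r_2}$ for $r_1 = x$ and $r_2 = x(1+y)$ violate condition $(b)$ of Proposition \ref{proposition S single point}, since $y \notin \sqrt{\<x\>} + \sqrt{\<y^2+x\>}$, and hence are not $\#A^1$-chain homotopic, while they become identified in $\@S^2(X)$. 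This yields a scheme $U$ for which $\@S(X)(U) \to \@S^2(X)(U)$ fails to be injective, whence $\@S(X) \nsimeq \@S^2(X)$.

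I expect the main obstacle to be the nodal generalization of Proposition \ref{proposition S single point}. Even though each node is individually modeled by a single-point blowup, a chain homotopy may move $s(u)$ off one node, across the interior of a neighbouring line, and onto an adjacent node; one must therefore control chain homotopies that traverse several components at once rather than treating the nodes in isolation, and verify that the square-root ideal bookkeeping is consistent across consecutive nodes so that the combined classification is exactly the one recorded in \cite{Balwe-Sawant-ruled}.
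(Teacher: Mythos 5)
Your high-level strategy (fix $\gamma\colon U\to C$, reduce to the fibre over $c_0$, feed the single-point analysis into the nodal case, use $\sqrt{\<r\>}+\sqrt{\<r'\>}=\sqrt{\<r,r'\>}$ over a DVR for (a), and transport the two-dimensional counterexample for (b)) is indeed the paper's. But the step you yourself flag as the main obstacle --- the ``nodal generalization of Proposition \ref{proposition S single point}'' --- is the entire content of the proof, and your proposal does not supply it; worse, the intermediate statement you propose to route it through is false. It is not true that all sections sending the closed point $u$ into the interior of a fixed line are mutually chain homotopic: for the nodal blowup containing the line $\ell_{1/2}$ with parameter $x/y^2$, take $R$ a henselian DVR with uniformizer $\pi$, $r_0=\pi^2$, $r_1=\pi$, $r_2=-\pi$. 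Both $\beta_{r_1}$ and $\beta_{r_2}$ lift and send $u$ to the \emph{same} interior point of $\ell_{1/2}$ (the parameter evaluates to $1$ at both), yet $r_1/r_2-1=-2$ is a unit and the criterion \eqref{equation S criterion} fails, so they are not $\#A^1$-chain homotopic. Hence ``classify sections by which component or node $s(u)$ lands in'' does not yield the correct equivalence classes, and a finer algebraic invariant is unavoidable even in the one-dimensional case of part (a).

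The paper circumvents a node-by-node local analysis entirely: it realizes $X_\gamma$ as the blowup of $\#P^1_U$ along the single product ideal $\@I_\gamma=\prod_i\<r_0^{a_i},y^{b_i}\>$, normalizes by elementary transformations so that $a_i\le b_i$ for all $i$ and the pair $(1,1)$ occurs, and then argues globally on the sections $\beta_r$. Necessity of \eqref{equation S criterion} comes from the single factor $\<r_0,y\>$ alone, via Proposition \ref{proposition S single point} applied to the intermediate single-point blowup through which every lift factors; sufficiency is obtained by re-running the two explicit homotopies $h_1,h_2$ from the proof of that proposition and checking, factor by factor and using $a_i\le b_i$, that $h_j^*\bigl(\<r_0^{a_i},y^{b_i}\>\bigr)$ is locally principal for \emph{every} $i$ (plus a separate easy case $r_0\mid r$ handled by a linear homotopy). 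This verification is exactly the ``consistency across consecutive nodes'' you name but do not carry out, and without it both (a) and (b) are unproved, since (b) also rests on the ``only if'' direction of the nodal criterion. Note finally that the counterexample must be adjusted before transport: the paper uses $r_0=x^{b_1}(y^2+x)$ rather than $x(y^2+x)$, the exponent $b_1$ being inserted precisely so that $\beta_{r_1}$ and $\beta_{r_2}$ lift to the full nodal blowup; the unmodified sections from Theorem \ref{theorem single point} need not lift when $b_1>1$.
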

\begin{proof}
There is a sequence of morphisms 
\begin{equation}
\label{eqn blowup}
X = X_p \to \cdots X_1 \to X_0 = \#P^1_C, 
\end{equation}
where for all $i \geq 1$, $X_{i} \to X_{i-1}$ is a blowup at some node. There exist ordered pairs of non-negative integers $(m_0, n_0), (m_1,n_1), \ldots, (m_p, n_p)$, $(m_{p+1}, n_{p+1})$ with the following properties:
\begin{itemize}
\item $m_i$ and $n_i$ are coprime or each $i$; 
\item $(m_0, n_0) = (0,1)$, $(m_1, n_1) = (1,0)$;
\item For any $i$, there exist integers $j$ and $k$ such that $1 \leq j < k< i$ such that $m_k n_j - m_j n_k = 1$ and $m_i = m_j + m_k$, $n_i = n_j + n_k$;
\item $X_i = X_{i-1}[x^{m_i}/y^{n_i}]$ (in the notation of \cite[Proposition 4.2]{Balwe-Sawant-ruled}).  In other words, $X_i$ is obtained from $X_{i-1}$ by resolving indeterminacies of the rational function $x^{m_i}/y^{n_i}$.
\end{itemize}
Note that $n_i \neq 0$ for any $i \neq 1$.  We relabel these ordered pairs as $(a_0, b_0)$, $(a_1, b_1), \ldots$, $(a_{p+1},b_{p+1})$, in such a way that $a_0/b_0 = 0 < a_1/b_1 < \ldots <a_p/b_p$ and $(a_{p+1}, b_{p+1}) = (1,0)$.  Set $s_i = a_i/b_i$ for $0 \leq i \leq p$ and $s_{p+1} = \infty$.   Note that $s_p = a_p/b_p$ is always an integer.   There are exactly $p+2$ pseudo-lines on $X$, namely $\ell_{0} = \ell_{s_0}$, $\ell_{s_1}$, $\cdots$, $\ell_{s_p}$ and $\ell_{s_{p+1}} = \ell_{\infty}$. For $i \neq j$, the pseudo-lines $\ell_{s_i}$ and $\ell_{s_{j}}$ meet if and only if $j = i+1$. The node in which $\ell_{s_i}$ and $\ell_{s_{i+1}}$ meet is locally given by the ideal $\<x^{a_{i+1}}/y^{b_{i+1}}, y^{b_{i}}/x^{a_{i}}\>$. From this description, it is clear that $X$ can be obtained from $\#P^1_C$ by blowing up the ideal $\@I = \prod_{i=1}^p \<x^{a_i}, y^{b_i}\>$.  We accordingly relabel the indices in \eqref{eqn blowup} in such a way that for every $i$, $X_{i+1}$ is obtained from $X_i$ by blowing up the ideal $\<x^{a_i}, y^{b_i}\>$.  

We may assume by using elementary transformations that every pair $(a_i, b_i)$ satisfies $a_i/b_i \leq 1$ and that $(a_p,b_p) = (1,1)$.   This follows by the analogue of \cite[Proposition 5.3]{Balwe-Sawant-ruled}; we thus omit the details.  

If $\gamma\colon  U \to C$ maps the closed point of $U$ to the generic point of $C$, then $\alpha$ factors through the open subscheme $\#P^1_{C \backslash \{c_0\}}$ of $X$. Hence any two $\gamma$-morphisms $\alpha_1, \alpha_2\colon  U \to X$ are $\#A^1$-chain homotopic. On the other hand, if $\gamma$ maps the generic point of $U$ to the closed point of $C$, then any $\gamma$-morphism $\alpha\colon  U \to X$ factors through the closed fibre of the morphism $X \to C$. The closed fibre of $X \to C$ is a connected scheme, the components of which are copies of $\#P^1_k$. Thus, again, in this case too, any two morphisms $\alpha_1, \alpha_2\colon  U \to C$ which lift $\gamma$ are $\#A^1$-chain homotopic. Hence, from this point onward, we will restrict our attention to the case in which $\gamma$ maps the closed point of $U$ to the closed point of $C$ and the generic point of $U$ to the generic point of $C$. 

Let $X_{\gamma}$ denote the pullback $X \times_{C, \gamma} U$. Then $X_{\gamma}$ is the blowup of $\#P^1_U$ at the the pullback $I_{\gamma}$ of the ideal $\@I$ with respect to the morphism $\#P^1_U \to \#P^1_C$. We continue to use the variables $Y$ and $Z$ as homogeneous coordinates on $\#P^1_U = U \times_{C, \gamma} \#P^1_C$ and we will also denote the pullback of the rational function $y= Y/Z$ on $\#P^1_C$ by $y$. Set $r_0 = \gamma^*(x)$.  Then $\@I_{\gamma} = \prod_{i=1}^p I_i$ where $I_i = \<r_0^{a_i}, y^{b_i}\>$.  Applying \cite[Proposition 3.7]{Balwe-Sawant-ruled} to the blowup $X_{\gamma} \to \#P^1_U$, we see that if $\alpha_1, \alpha_2\colon  U \to \#P^1_C$ are $\gamma$-morphisms which are connected by an $\#A^1$-chain homotopy $\@H$ which lifts to $X$, then there exists $r \in R$ such that the ideals $\alpha_1^*(\@I)$, $\alpha_2^*(\@I)$ and $h_{\@H}^*(\@I)$ are all generated by $r$. The support of the ideal $\@I$ is the point $(c_0, [0:1])$. Hence, $\alpha_1$ maps the closed point of $U$ to $(c_0, [0:1])$ if and only if $\alpha_2$ does the same. Any two morphisms $\alpha_1, \alpha_2\colon  U \to \#P^1_C$ which map the closed point of $U$ to $\ell_0 \backslash \{(c_0, [0:1])\}$ are clearly $\#A^1$-chain homotopic via a homotopy which factors through the open subscheme $\#P^1_C \backslash \{(c_0, [0:1])\}$. Such a homotopy clearly lifts to $X$. Hence, we see that any two $\gamma$-morphisms of this type are $\#A^1$-chain homotopic. Hence, we will now focus on $\gamma$-morphisms $U \to \#P^1_C$ which map the closed point of $U$ to $(c_0, [0:1])$. 

For any $r \in R$, let $\beta_r\colon  U \to \#P^1_U$ be the morphism induced by the $R$-algebra homomorphism $R[y] \to R$, $y \mapsto r$. Note that any section $\beta\colon  U \to \#P^1_U$ such that $\beta(u) = (u, [0:1])$ is of the form $\beta_r$ for some unique $r \in \mathfrak{m}$. Such a section lifts to $X_{\gamma}$ if and only if the ideal $\<r_0^{a_i}, r^{b_i}\>$ is principal for every $i$, $1 \leq i \leq p$. 

By \cite[Proposition 3.7]{Balwe-Sawant-ruled}, we see that the the ideals (or ideal sheaves) $\beta_{r_1}^*(I_1)$, $\beta_{r_2}^*(I_1)$, $h_{\@H}^*(I_1)$ are all generated by a fixed element of $R$, which we denote by $r$.  we now have two cases:

\medskip

\noindent {\bf Case A:} $r_0 | r$. This happens when the ideals $\alpha_1^*(I_1)$, $\alpha_2^*(I_1)$ are generated by $r_0$. Thus $r_0|r_1$ and $r_0|r_2$. 

\noindent {\bf Case B:} $r|r_0$ but $r_0 \notdivides r$.  In this case, for $i = 1,2$, the ideal $\alpha_i^*(I_1) = \<r_0, r_i\>$ is generated by $r_i$. As $\<r_0, r_1\> = \<r_0, r_2\>$, we see that $r_1$ and $r_2$ must be unit multiples of $r$.   

\medskip

At least one of the cases A or B must hold for the lifts of both $\beta_{r_1}$ and $\beta_{r_2}$ to $X_{\gamma}$ to be $\#A^1$-chain homotopic.  

If we are in Case A, consider the homotopy 
\[
h\colon  \Spec R[T] = \#A^1_U \to \#A^1_U = \Spec R[y] 
\]   
defined by $y \mapsto r_1T + r_2(1-T)$. Then as $a_i \leq b_i$ for all $i$, and as $r_0$ divides $r_1$ and $r_2$, we get
\[
h^*(\<r_0^{a_i}, y^{b_i}\>) = \<r_0^{a_i}, (r_1T + r_2(1-T))^{b_i}\> = \<r_0^{a_i} \>    
\]
which is a principal ideal. Thus, the homotopy $h$ lifts to $X_{\gamma}$ and the lifts of $\beta_{r_1}$ and $\beta_{r_2}$ are $\#A^1$-chain homotopic. 

Now, we consider Case B. Thus, $r$ is an element of $R$ such that $r|r_0$, $r_0 \notdivides r$ and $r_1$ and $r_2$ unit elements of $\mathfrak{m}$ such that $r_i/r$ is a unit for $i = 1,2$.  Let $r_1, r_2 \in \mathfrak{m}$ be such that the sections $\beta_{r_1}$ and $\beta_{r_2}$ lift to $X_{\gamma}$ and are connected by an $\#A^1$-chain homotopy $\@H$ which also lifts to $X_{\gamma}$. As $(a_p,b_p) = (1,1)$, the sections $\beta_{r_1}$, $\beta_{r_2}$  and the $\#A^1$-chain homotopy $\@H$ lift to $X_1 \times_{U, \gamma} \#P^1_U$ which is obtained from $\#P^1_U$ by blowing up the ideal $I_1 =\<r_0, y\>$.  By Proposition \ref{proposition S single point}, $r_1/r_2$ must be a unit and we must have 
\begin{equation}
\label{equation S criterion}
r_1/r_2 -1 \in \sqrt{\<r_1\>} + \sqrt{\<r'\>} \subset \sqrt{\<r_1, r'\>},
\end{equation}
where $r' = r_0/r_1$.  Now suppose that the condition \eqref{equation S criterion} holds.  We will show that the lifts of $\beta_{r_1}$ and $\beta_{r_2}$ are $\#A^1$-chain homotopic by an argument analogous to the one used in the proof of Proposition \ref{proposition S single point}.  Let $s_1$ and $s^{\prime}$ be the square-free parts of $r_1$ and $r^{\prime}:= r_0/r_1$ and write $r_2/r_1 -1 = s_1 \delta_1 + s' \delta'$, for some $\delta_1, \delta' \in R$.  We define $r_3 = r_1(1 + s_1 \delta_1)$.  Consider the rational function $$f_1(T) = \dfrac{r_3}{1 + s_1 \delta_1 T}.$$ The ideal $\<r_3, 1 + s_1 \delta_1 T\>$ is the unit ideal since $s_1 \delta_1$ is nilpotent modulo $r_1$ and consequently, nilpotent modulo $r_3$. Thus $f_1$ defines a homotopy 
\[
h_1\colon  \#A^1_U \to \#P^1_U; Y/Z \mapsto \dfrac{r_3}{1 + s_1 \delta_1 T}.
\]
Since $f_1(0) = r_3$ and $f_1(1) = r_1$, this homotopy connects $\beta_{r_3}$ to $\beta_{r_1}$. Now, let $p$ be a point of $\#A^1_U$ which projects to $z \in U$ under the projection $\#A^1_U \to U$. We see that for every $i$, $$h_1^*(I_i)_p = \<r_0^{a_i}, \left(\frac{r_3 }{1 + s_1 \delta_1T}\right)^{b_i}\>$$ is principal if $1+s_1 \delta_1T$ is a unit at $p$. If $1 + s_1 \delta_1 T$ is not a unit, then $r_1$ is also a unit at $p$ and it follows that $h_1^*(I_i)_p$ is the unit ideal since $h_1(p) \notin Supp(I_i)$.  Consequently, $h_1^*(\@I_{\gamma})$ is a locally principal ideal, which allows us to conclude that $h_1$ lifts to $X_{\gamma}$. 

Consider $$f_2(T) = r_3 + r_1s^{\prime} \delta^{\prime}T = r_3 \left( 1 + 
\dfrac{s^{\prime} \delta^{\prime}T}{1 + s_1 \delta_1} \right).$$ This defines a homotopy 
\[
h_2\colon  \#A^1_U \to \#P^1_U; Y/Z \mapsto r_3 \left( 1 + 
\dfrac{s^{\prime} \delta^{\prime}T}{1 + s_1 \delta_1} \right).
\]
If $r_0^{a_i}|r_3^{b_i}$, then there is nothing to prove.  Assume that $r_3^{b_i}|r_0^{a_i}$ and $r_0^{a_i}\notdivides r_3^{b_i}$ at $p \in \#A^1_U$.      If $\mathfrak{p} \subset R[T]$ is the prime ideal corresponding to $p$, then  $r_0^{a_i}/r_3^{b_i} \in \mathfrak{p}$   implies that $s' \in \mathfrak{p}$.  Thus,
\[
h_2^*(I_i)_p = \< r_3^{b_i} \> \< r_0^{a_i}/r_3^{b_i}, \left( 1 + 
\dfrac{s^{\prime} \delta^{\prime}T}{1 + s_1 \delta_1} \right) \>_p 
\]
is principal.  If $s' \notin \mathfrak{p}$, then $r_0/r_3$ is a unit at $p$.  Since
\[
\dfrac{r_0^{a_i}}{r_3^{b_i}} = \left( \dfrac{r_0}{r_3}\right)^{a_i} \left( \dfrac{1}{r_3}\right)^{b_i-a_i} 
\]
is regular at $p$ and $r_0/r_3$ is a unit, $1/r_3$ is regular at $p$.  Therefore, $r_3$ and hence, $r_0$ is a unit at $p$.  Consequently, $\dfrac{r_0^{a_i}}{r_3^{b_i}}$ is a unit at $p$, showing that $h_2^*(\@I_{\gamma})$ is a locally principal ideal, which allows us to conclude that $h_2$ lifts to $X_{\gamma}$. Thus, we have proved that in Case B, the lifts of $\beta_{r_1}$ and $\beta_{r_2}$ are $\#A^1$-chain homotopic if and only if the condition \eqref{equation S criterion} holds.

The proof of part $(a)$ can be now completed exactly as in the proof of Theorem \ref{theorem single point} by observing that when $R$ is a discrete valuation ring, one has $\sqrt{\<r_1\>} + \sqrt{\<r'\>} = \sqrt{\<r_1, r'\>}$.
 
We now prove part $(b)$ by constructing a counterexample similar to the one given in the proof of Theorem \ref{theorem single point}.  For $X$ as above, consider the $2$-dimensional henselian local ring $\~R = k[x,y]^h_{\<x,y\>}$ with $U = \Spec \~R$ and consider the morphism $\gamma\colon  U \to C$  defined by $r_0 = x^{b_1}(y^2+x)$.  Consider $\beta_{r_1}, \beta_{r_2}\colon  U \to \#P^1_U$ determined by $r_1 = x$ and $r_2 = x(y+1)$.  Note that $\beta_{r_1}$ and $\beta_{r_2}$ lift to $X_{\gamma}$, since for every $i>1$ we have $a_i/b_i>a_1/b_1$.  Their lifts both map the closed point of $U$ to the point of $X_{\gamma}$ defined by the ideal $(y, x^{a_1}/y^{b_1})$, where in fact, $a_1=1$.  Now, $r' = r_0/r_1 = y^2+ x$ and we have $$\sqrt{\<r_1\>} + \sqrt{\<r'\>} = \<x, y^2 + x\> = \<x,y^2\>,$$ while $\sqrt{\<r_1,r'\>} = \<x,y\>$.  Thus $$r_2/r_1 - 1= y \notin \sqrt{\<r_1\>} + \sqrt{\<r'\>}.$$  By the criterion \eqref{equation S criterion}, we conclude that the lifts of $\beta_{r_1}$ and $\beta_{r_2}$ are not $\#A^1$-chain homotopic, but are $1$-ghost homotopic by \cite[Proof of Theorem 5.1, Claim 2]{Balwe-Sawant-ruled}.  Consequently, $\@S(X)(U) \neq \@S^2(X)(U)$.  By Remark \ref{remark sing a1-local}, it follows that $\Sing X$ is not $\#A^1$-local.
\end{proof}

We are now set to prove Theorem \ref{theorem main theorem intro} and Theorem \ref{theorem application intro} for a general smooth projective ruled surface.  Since the method used here is exactly parallel to the one used in \cite[Section 6]{Balwe-Sawant-ruled}, we will only outline the proof and refer the reader to \cite{Balwe-Sawant-ruled} for details.  

\subsection*{Proofs of Theorem \ref{theorem main theorem intro} and Theorem \ref{theorem application intro}}

Let $E$ be the projectivization of a rank $2$ vector bundle over a smooth projective curve $C$ of genus $g>0$ over an algebraically closed field $k$ of characteristic $0$. Let $X$ be a smooth projective surface with minimal model $E$.  Let $U$ be a smooth henselian local scheme over $k$.  Since $C$ is $\#A^1$-rigid, by \cite[Lemma 2.12]{Balwe-Sawant-ruled}, if two morphisms $U \to X$ are $\#A^1$-chain homotopic, then their compositions with the map $X \to C$ have to be the same. Thus, we fix a morphism $\gamma\colon  U \to C$ and characterize $\#A^1$-chain homotopy classes of morphisms $U \to X$ over $\gamma$.  Note that all the $\#A^1$-chain homotopies of $\gamma$-morphisms $U \to X$ factor through the pullback of $X$ by the morphism $\Spec \@O_{C,c_0}^h \to C$.  Note that the pullback of $E$ by the morphism $\Spec \@O_{C,c_0}^h \to C$ is just $\#P^1 \times {\Spec \@O_{C,c_0}^h}$.  We can therefore replace $C$ with $\Spec \@O_{C,c_0}^h$ and assume that $C$ is as in Notation \ref{notation nodal blowups}.  Since the case $X=E$ is clear from \cite[Proposition 2.13]{Balwe-Sawant-ruled}, we will henceforth assume that $X$ is obtained by blowing up $\#P^1_C$ at nonzero, finitely many successive blowups at smooth, closed points $\{Q_1, \ldots, Q_m\}$ and by blowing up the ideal $\@J$.  For any $Q \in \{Q_1, \ldots, Q_m\}$, points in the exceptional divisor over $Q$ are said to be \emph{infinitely near} to $Q$.  Let $N_Q$ denote the number of points in $\{Q_1, \ldots, Q_m\}$, which are infinitely near to $Q$.  Define $$N_X:= \max \{N_{Q_1}, \ldots, N_{Q_m}\}.$$ Theorem \ref{theorem main theorem intro} is proved by induction on $N_X$, which should be viewed as the \emph{complexity} of $X$ as a blowup of $\#P^1_C$.  

Fix a smooth $1$-dimensional henselian local ring $(R, \mathfrak{m})$, $U = \Spec R$ and a morphism $\gamma\colon  U \to C$.  Denote the pullback of $X$ with respect to $\gamma$ by $X_{\gamma}$.  Let $\alpha, \alpha'\colon  U \to \#P^1_C$ be morphisms over $\gamma$ that lift to $X$.  In \cite[Proof of Theorem 1.5]{Balwe-Sawant-ruled}, it is shown that we can find a smooth, projective ruled surface $X'$ such that 
\begin{itemize}
\item $X'$ is obtained by taking successive blowups of finitely many smooth, closed points of $\#P^1_C$;
\item $\~X$ is an open subscheme of $X'$;
\item $\alpha, \alpha'$ lift to $X'$ and their lifts in $X'$ are $n$-ghost homotopic if and only if the lifts of $\alpha, \alpha'$ in $\~X$ are $n$-ghost homotopic; and 
\item $N_{X'}<N_X$.
\end{itemize}
We observe that $\#A^1$-chain homotopies are nothing but $0$-ghost homotopies.  One can now finish the proof of Theorem \ref{theorem main theorem intro} using Theorem \ref{theorem main theorem nodal} and the induction hypothesis.

For the proof of Theorem \ref{theorem application intro}, observe that one can factor $X \to \#P^1_C$ as $X \to Y \to \#P^1_C$ such that $Y$ is a nodal blowup.  One can now take the counterexample given in the proof of Theorem \ref{theorem single point} and observe that it lifts to $Y_{\gamma}$ and $X_{\gamma}$ to obtain $\@S(X) \neq \@S^2(X)$.  By Remark \ref{remark sing a1-local}, it follows that $\Sing X$ is not $\#A^1$-local, when $X$ is not a minimal model. \qed


\begin{thebibliography}{9999}

\bibitem{Asok-Hoyois-Wendt-2}
A. Asok, M. Hoyois and M. Wendt:
\emph{Affine representability results in $\#A^1$-homotopy theory II: principal bundles and homogeneous spaces}, Geom. Topol. 22 (2018), no. 2, 1181--1225.

\bibitem{Asok-Morel}
A. Asok and F. Morel:
\emph{Smooth varieties up to $\#A^1$-homotopy and algebraic $h$-cobordisms}. 
Adv. Math. 227(2011), no. 5, 1990--2058.

\bibitem{Balwe-Hogadi-Sawant}
C. Balwe, A. Hogadi and A. Sawant:
\emph{$\#A^1$-connected components of schemes},
Adv. Math. 282 (2015), 335--361. 

\bibitem{Balwe-Sawant-IMRN}
C. Balwe and A. Sawant:
\emph{$R$-equivalence and $\#A^1$-connectedness in anisotropic groups},
Int. Math. Res. Not. IMRN 2015, No. 22, 11816--11827.


\bibitem{Balwe-Sawant-ruled}
C. Balwe and A. Sawant:
\emph{$\#A^1$-connected components of ruled surfaces}, to appear  Geom. Topol., Preprint - arXiv:1911.05549 [math.AG].


%
%


\bibitem{Morel-Voevodsky}
F. Morel and V. Voevodsky:
\emph{$\#A^1$-homotopy theory of schemes},
Inst. Hautes \'Etudes Sci. Publ. Math. 90(1999) 45--143. 


\end{thebibliography}
\end{document}